\documentclass[12pt,a4paper]{amsart}
\usepackage[utf8]{inputenc}

\usepackage{amssymb,amsfonts,amsthm}
\usepackage{amsrefs}
\usepackage{mathtools}
\usepackage{graphicx}
\usepackage[colorlinks=true, allcolors=blue]{hyperref}
\usepackage{csquotes}
\usepackage{booktabs}
\usepackage{tikz-cd}
 
\usepackage{enumerate}
\usepackage{bbm}
\usepackage{dsfont}
\usepackage{mathrsfs}
\usepackage{tikz-cd}
\usepackage{import}
\usepackage{hyperref}
\usepackage{tikz}
\usepackage{xspace}
\usepackage{ytableau}

\usepackage{comment}
\usepackage{makecell} 

\usepackage[margin=1in]{geometry}
\setlength{\marginparwidth}{0.9in}
\usepackage[colorinlistoftodos, bordercolor=orange, backgroundcolor=orange!20, linecolor=orange, textsize=scriptsize]{todonotes}

\newtheorem{thm}{Theorem}[section]
\newtheorem{cor}[thm]{Corollary}
\newtheorem{prop}[thm]{Proposition}
\newtheorem{lem}[thm]{Lemma}
\newtheorem{conj}[thm]{Conjecture}

\theoremstyle{definition}

\newtheorem{example}[thm]{Example}

\newtheorem{rmk}[thm]{Remark}

\newtheorem{quest}[thm]{Question}


\def\N{\mathbb{N}}
\def\Z{\mathbb{Z}}
\def\Q{\mathbb{Q}}
\def\R{\mathbb{R}}
\def\C{\mathbb{C}}
\def\T{\mathbb{T}}

\newcommand\cC{\mathcal{C}}
\newcommand\cP{\mathcal{P}}

\newcommand\polymake{\texttt{polymake}\xspace}
\newcommand\OSCAR{\texttt{OSCAR}\xspace}
\newcommand\scip{\texttt{SCIP}\xspace}
\newcommand\MathRepo{\texttt{MathRepo}\xspace}

\newcommand{\U}{\operatorname{U}}
\newcommand{\SU}{\operatorname{SU}}
\newcommand{\Sym}{\operatorname{Sym}}
\newcommand{\diag}{\operatorname{diag}}
\newcommand{\tr}{\operatorname{tr}}
\newcommand{\tdeg}{\operatorname{tdeg}}
\newcommand{\lcm}{\operatorname{lcm}}
\newcommand{\SLalg}{\mathfrak{sl}}



\title[Bergman's compact amalgamation problem]{Some thoughts and experiments on Bergman's compact amalgamation problem}

\author{Michael Joswig}
\address{M.J., TU Berlin and MPI MiS Leipzig, Germany}
\email{joswig@math.tu-berlin.de}

\author{Mario Kummer}
\address{M.K., Technische Universit\"at Dresden, Germany} 
\email{mario.kummer@tu-dresden.de}
\author{Andreas Thom}
\address{A.T., Technische Universit\"at Dresden, Germany} 
\email{andreas.thom@tu-dresden.de}
\author{Claudia He Yun}
\address{C.H.Y., MPI MiS Leipzig, Germany}
\email{clyun@mis.mpg.de}

\subjclass{%
  22C05, 
  18B99, 
  90-05, 
  90C90 
}

\begin{document}

\begin{abstract}
We study the question whether copies of $S^1$ in $\SU(3)$ can be amalgamated in a compact group. This is the simplest instance of a fundamental open problem in the theory of compact groups raised by George Bergman in 1987. Considerable computational experiments suggest that the answer is positive in this case. We obtain a positive answer for a relaxed problem using theoretical considerations.
\end{abstract}

\maketitle

\section{Introduction}

We write $\SU(3)$ for the group of $3 {\times} 3$ complex, unitary matrices with determinant equal to $1$. Consider the closed subgroups $\T_1 = \{\diag(z,1,z^{-1}) \mid z \in S^1 \}$ and $\T_2 =\{\diag(z,z,z^{-2}) \mid z \in S^1 \}$, where $S^1$ denotes the multiplicative group of complex numbers of norm one. Both $\T_1$ and $\T_2$ are isomorphic to $S^1$ as topological groups, via the natural isomorphisms $z \mapsto \diag(z,1,z^{-1})$ and $z \mapsto \diag(z,z,z^{-2})$, respectively.  However, the two representations of $S^1$ in $\SU(3)$ are not equal and not even conjugate in $\SU(3).$ So it is a natural question to wonder whether there exist unitary representations $\pi_1,\pi_2 \colon \SU(3) \to \U(n)$ for some $n$, such that the two representations of $S^1$ can be matched, or more precisely $$\pi_1(\diag(z,z^{-1},1))=\pi_2(\diag(z,z,z^{-2})), \quad \textrm{ for all } z \in S^1.$$ We denote by $\rho$ the natural representation of $\SU(3)$ on $\C^3$. One can then check that the two $9$-dimensional representations $\pi_1=\rho \otimes \bar \rho$ and $\pi_2 = \rho \oplus \bar \rho \oplus 1^{\oplus 3}$ solve this problem, where $\bar \rho$ denotes the conjugate representation, and $1$ is the trivial one-dimensional representation. We will come back to this example several times in this article.

This concrete question belongs to a more general set of problems that was first studied by Bergman \cite{MR893156}. Let $A,B$ be compact groups that share a common closed subgroup $C$, see \cite{MR4201900} for background on the theory of general compact groups. It is natural to consider the abstract amalgamated free product $G:=A \ast_C B$ and try to study the analytic properties that it inherits from its constituents. A natural question is whether $G$ can carry a possibly non-Hausdorff compact topology that restricts to the given topologies on $A$ and $B.$ Equivalently, we ask whether $A$ and $B$ can be amalgamated over $C$ in the category of compact groups, i.e., if there exists a compact group $D$ and embeddings of $A$ and $B$ in $D$ that agree on the common copy of $C$.
$$\begin{tikzcd}
&  A \arrow[rrd, bend left, "\pi_1"]  &  &   \\
C \arrow[ru, bend left] \arrow[rd, bend right] &  &   & D \\
&  B \arrow[rru, bend right, "\pi_2"] &  &  
\end{tikzcd}
$$
Compact groups carry bi-invariant metrics that generate the topology. Thus, a first obstruction to a positive answer is that $A$ and $B$ may not carry bi-invariant metrics that agree on $C$. If this is the case, a bi-invariant pseudo-metric on $G$ that is faithful on $A$ and $B$ cannot exist. In particular, there does not exist a compact group $D$ as described above. Bergman showed that this type of argument rules out existence of compact amalgams in many cases. It is a fundamental open problem, if amalgamation is always possible for $C=S^1$ or $C=\SU(n)$, see \cite[Question 20]{MR893156}.  

The purpose of this note is to explore an equivalent algebraic reformulation of the problem in the simplest possible case. What got us started was the strategy outlined after Question 20 in \cite{MR893156}.  That strategy is amenable to standard computer algebra systems.  Our computer experiments, with \scip \cite{scip} and \OSCAR \cites{OSCAR-book,OSCAR}, suggest that solutions to the original problem can always be found but get increasingly complicated. For example, merging the subgroups
$\T_1 = \{\diag(z,z^5,z^{-6}) \mid z \in S^1 \}$ and $\T_2 =\{\diag(z,z^7,z^{-8}) \mid z \in S^1 \}$ of $\SU(3)$ required us to consider all possible direct sums of the first $120$ mutually non-equivalent irreducible representations of $\SU(3)$ (in some specified order) until a pair of unitary representations of $\SU(3)$ on a complex vector space of dimension roughly $300{,}000$ could be found that solves the problem.

\subsection*{Acknowledgements}
We are indebted to Ulrich Thiel for contributing partitions, Schur polynomials and associated Lie theory to \OSCAR; we are also grateful to Matthias Zach for helping with the \OSCAR integration. We thank Tobias Boege for many helpful conversations.
MJ has been supported by Deutsche Forschungsgemeinschaft (DFG, German Research Foundation): \enquote{Symbolic Tools in Mathematics and their Application} (TRR 195, project-ID 286237555); The Berlin Mathematics Research Center MATH$^+$ (EXC-2046/1, project ID 390685689). MK has been supported by Deutsche Forschungsgemeinschaft (DFG, German Research Foundation), grant number 502861109.

\section{Some basic observations}

Let us study the question of amalgamation of the base $C=S^1$.  It follows from the Peter--Weyl Theorem \cite[Theorem 1.12]{Knapp:1986} that in order to construct amalgams of general compact groups $A$ and $B$, it is enough to consider the case $A=\U(n)$, $B=\U(m)$. Since $\U(n)$ embeds to $\SU(n+1)$, we can further restrict to the case $A=\SU(n)$, $B=\SU(m)$. The simplest case that comes to mind is $A=B=\SU(2)$. In this case, however, every embedding of $S^1$ in $\SU(2)$ is conjugate to the map $z\mapsto \diag(z,z^{-1})$. Thus, the first truly non-trivial case might be $A=\SU(2)$ and $B=\SU(3)$ or somewhat more general $A=B=\SU(3)$. Our first task is to describe the possible embeddings of $S^1$ in $\SU(3)$. 
Our second task is to search for pairs of faithful, finite-dimensional, unitary representations of $\SU(3)$ that agree on the embedded copies of $S^1$.

The first task is easy to solve. Up to conjugation in $\SU(3)$, every embedding of $S^1$ into $\SU(3)$ is given by three integers $(a,b,c) \in \Z^3$ such that $a+b+c=0$ and $\gcd(a,b,c)=1$. The embedding associated with the triplet $(a,b,c)$ is concretely given by 
$$\psi_{a,b,c}(z):= \diag(z^{a},z^b,z^c) \in \SU(3), \quad \textrm{ for all } z \in S^1.$$

In order to address the second task, we recall some facts about the finite-dimensional unitary representation theory of $\SU(3)$. Let $\rho$ be the standard representation of $\SU(3)$ on $\C^3$ and $\bar \rho$ be its dual or conjugate. We denote by $\pi_{m,n} \colon \SU(3) \to \U(\Gamma_{m,n})$ the irreducible representation parameterized by the weight $(m,n)$, a pair of non-negative integers. This representation corresponds to the Young tableaux of shape $(m+n,n)$. According to Fulton--Harris \cite[§13.2]{MR1153249}, we have
\begin{equation}
    \label{rep}
    \Gamma_{m,n} = \ker\left(\Sym^m(\rho) \otimes \Sym^n(\bar \rho) \stackrel{\phi_{m,n}}{\to} \Sym^{m-1}(\rho) \otimes \Sym^{n-1}(\rho)\right),
\end{equation} 
where $\phi_{m,n}$ denotes the natural (surjective) contraction map
\begin{equation*}
    \phi_{m,n} \bigl((v_1\dots v_m)\otimes(v_1^*\dots v_n^*) \bigr) := \sum_{i=1}^m \sum_{j=1}^n \langle v_i,v_j^*\rangle (v_1\dots \hat{v_i}\dots v_m)\otimes (v_1^*\dots \hat{v}^*_j\dots v^*_n).
\end{equation*}
Note that representations of the compact Lie group $\SU(3)$ correspond to representations of the simple Lie algebra $\SLalg_3\C$; see \cite[§9.3]{MR1153249}.

Every unitary representation $\sigma$ of $\SU(3)$ extends to a unitary representation $\sigma'$ of $\U(3)$, which is however not unique. The character of a unitary representation $\sigma \colon \SU(3) \to U(k)$ is a symmetric polynomial $\chi(\sigma) \in \Z[x_1,x_2,x_3]$ determined uniquely up to some element in the ideal generated by $x_1x_2x_3-1$ by the property $$\chi(\sigma)(x_1,x_2,x_3)= \tr\bigl(\sigma'(\diag(x_1,x_2,x_3))\bigr), \quad \textrm{ for all } x_1,x_2,x_3 \in S^1.$$ It is known that
$\chi(\pi'_{m,n}) = s_{m+n,n},$ where $s_{\lambda}$ denotes the Schur polynomial of the Young tableaux with two parts $(m+n,n).$ Here, $\pi'_{m,n}$ is the representation of $\U(3)$ described by the same formula as in Equation \eqref{rep}.
Recall that $\chi(\sigma)(1,1,1)$ equals the dimension of the representation $\sigma$.

Now, every finite-dimensional unitary representation is a direct sum of irreducible unitary representations, and thus every character of such a representation is a symmetric, non-negative integer linear combination of Schur polynomials. We call such a symmetric polynomial \emph{Schur positive}.
Here, the polynomial $s_{1,1,1}=x_1x_2x_3$ corresponds to the trivial representation $1$.
We say that a Schur positive polynomial is \emph{non-trivial} if it is non-scalar modulo the polynomial $x_1x_2x_3-1.$


Back to the original problem, we are interested in the question if for a given pair of triplets of integers, $(v_1,v_2,v_3)$ and $(w_1,w_2,w_3)$, with $v_1+v_2+v_3=w_1+w_2+w_3=0$ and $\gcd(v_1,v_2,v_3)=\gcd(w_1,w_2,w_3)=1$, we can find a pair of unitary representations $$\sigma_1,\sigma_2 \colon \SU(3) \to \U(k),$$ such that
$$\sigma_1\bigl(\psi_{v_1,v_2,v_3}(z)\bigr)= \sigma_2\bigl(\psi_{w_1,w_2,w_3}(z)\bigr) \quad \textrm{ for all } z \in S^1.$$
However, these two representations are conjugate (and hence equal after conjugation) if and only if the associated characters agree. Hence, we arrive at the equivalent condition
$$\chi(\sigma_1)(z^{v_1},z^{v_2},z^{v_3}) = \chi(\sigma_2)(z^{w_1},z^{w_2},z^{w_3}) \quad \textrm{ for all } z \in S^1.$$
Thus, putting everything in more algebraic terms, the second task amounts to finding Schur positive polynomials $P$ and $Q$ such that the equality
\begin{equation}
  P(z^{v_1},z^{v_2},z^{v_3}) = Q(z^{w_1},z^{w_2},z^{w_3})
\end{equation}
holds in the Laurent polynomial ring $\Q[z^\pm]$.
For brevity, given a vector $v=(a,b,c)$ with $a+b+c=0$, we write $P_v(z) = P(z^a,z^b,z^c)$ for the substitution.

There is an additional subtlety that we did not address so far: unitary representations of $\SU(3)$ need not be injective.
If we pick a non-trivial third root of unity, $\xi=\exp(2\pi i/3)\in S^1$,
then the subgroup
$$ Z = \bigl\langle \diag(\xi,\xi,\xi) \bigr\rangle \cong \Z/3\Z, $$
forms the center of $\SU(3)$.
Note that $Z$ is the only non-trivial normal subgroup of $\SU(3)$, i.e., the quotient $\SU(3)/Z$ is simple.
The following result characterizes the injective unitary representations of $\SU(3)$.
\begin{prop}\label{prop:injective}
Let $\sigma \colon \SU(3) \to \U(k)$ be a unitary representation with character $P=\chi(\sigma') \in \Z[x_1,x_2,x_3]$ for some extension $\sigma'$ of $\sigma$ to $\U(3)$. Then the following conditions are equivalent:
\begin{enumerate}
    \item $\sigma$ is injective,
    \item $P(\xi,\xi,\xi) \neq P(1,1,1)$, and
    \item $P$, written in terms of the Schur basis, has a summand (with positive coefficient) having total degree not divisible by three.
\end{enumerate}
\end{prop}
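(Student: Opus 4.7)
My plan is to exploit the classification of the closed normal subgroups of $\SU(3)$ together with the homogeneity of Schur polynomials. Since $Z$ is the unique proper non-trivial closed normal subgroup of $\SU(3)$ (noted in the excerpt), the kernel of $\sigma$ is one of $\{e\}$, $Z$, or $\SU(3)$, so $\sigma$ is injective if and only if $\sigma(\diag(\xi,\xi,\xi)) \neq I$. I would then observe that $M := \sigma(\diag(\xi,\xi,\xi))$ is a unitary matrix with $M^{3} = I$, so its eigenvalues lie in $\{1,\xi,\xi^{2}\}$; hence $\tr(M) = \dim\sigma$ forces every eigenvalue to equal $1$ and thus $M = I$. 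Together with the identifications $\tr(M) = P(\xi,\xi,\xi)$ and $\dim\sigma = P(1,1,1)$, this delivers the equivalence (1)$\iff$(2).

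For (2)$\iff$(3), I would expand $P = \sum_{\lambda} c_{\lambda} s_{\lambda}$ in the Schur basis with $c_{\lambda} \in \Z_{\geq 0}$. Since each $s_{\lambda}$ is homogeneous of degree $|\lambda|$, the substitution $s_{\lambda}(\xi,\xi,\xi) = \xi^{|\lambda|} s_{\lambda}(1,1,1)$ yields
\[
P(1,1,1) - P(\xi,\xi,\xi) \;=\; (1-\xi)\, A \;+\; (1-\xi^{2})\, B,
\]
where $A$ and $B$ sum $c_{\lambda} s_{\lambda}(1,1,1)$ over partitions $\lambda$ with $|\lambda| \equiv 1 \pmod 3$ and $|\lambda| \equiv 2 \pmod 3$ respectively, both being non-negative reals (note $s_{\lambda}(1,1,1) > 0$). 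Since $1-\xi$ and $1-\xi^{2}$ both have real part $3/2 > 0$, the right-hand side can vanish with $A,B \geq 0$ only when $A = B = 0$, which is exactly the statement that every $\lambda$ with $c_{\lambda} > 0$ satisfies $3 \mid |\lambda|$. This gives (2)$\iff$(3).

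One subtlety I would address at the outset is that the Schur expansion of $P$ depends on the chosen extension $\sigma'$: two extensions of $\sigma$ to $\U(3)$ differ by tensoring with a power of $\det$, which multiplies $P$ by a power of $x_{1}x_{2}x_{3} = s_{(1,1,1)}$. In the Schur basis this appends columns of length three to each relevant partition and shifts every $|\lambda|$ by a multiple of $3$; hence the multiset $\{|\lambda| \bmod 3 : c_{\lambda} > 0\}$ is an invariant of $\sigma$, and condition (3) is well-posed. I expect the only real obstacle to be packaging this well-definedness argument neatly; once that is in place, the eigenvalue argument for (1)$\iff$(2) and the non-negativity argument for (2)$\iff$(3) are short and mechanical.
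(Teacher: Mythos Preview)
Your proof is correct. The argument for (1)$\iff$(2) is essentially identical to the paper's: both reduce to showing that $\sigma(\diag(\xi,\xi,\xi))=I$ is equivalent to its trace equalling $k$, via the eigenvalue constraint. You phrase it using $M^{3}=I$ so that the eigenvalues lie among the cube roots of unity; the paper only uses $|\,\text{eigenvalue}\,|=1$, but the conclusion is the same.

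For (2)$\iff$(3) you take a different and cleaner route than the paper. The paper sets $R(x)=P(x,x,x)$, reduces modulo $x^{3}-1$, and then argues with the minimal polynomial $x^{2}+x+1$ of $\xi$ to reach a contradiction. Your approach bypasses this: from $s_{\lambda}(\xi,\xi,\xi)=\xi^{|\lambda|}s_{\lambda}(1,1,1)$ you write $P(1,1,1)-P(\xi,\xi,\xi)=(1-\xi)A+(1-\xi^{2})B$ with $A,B\geq 0$, and the observation $\operatorname{Re}(1-\xi)=\operatorname{Re}(1-\xi^{2})=3/2$ kills both implications in one stroke. This is shorter and makes the role of Schur positivity more transparent.

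Your well-definedness discussion of (3) is a welcome addition that the paper omits. One small imprecision: for reducible $\sigma$, two extensions to $\U(3)$ need not differ by a \emph{single} global twist by $\det^{m}$; each irreducible summand may be twisted by its own power. Your conclusion is unaffected, since this still shifts every $|\lambda|$ by a multiple of $3$, so the multiset $\{\,|\lambda|\bmod 3 : c_{\lambda}>0\,\}$ remains an invariant of $\sigma$.
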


\begin{proof} Since the center $Z$ of $\SU(3)$ is generated by $\diag(\xi,\xi,\xi)$ and because $Z$ is the only non-trivial normal subgroup of $\SU(3)$, the representation $\sigma$ is injective if and only if $\sigma(\diag(\xi,\xi,\xi))$ is distinct from the $k{\times}k$ unit matrix. The eigenvalues of $\sigma(\diag(\xi,\xi,\xi))$ are complex numbers of modulus $1$ and $P(\xi,\xi,\xi)=\tr(\sigma(\diag(\xi,\xi,\xi)))$ is equal to their sum. Hence, $P(\xi,\xi,\xi)=k=P(1,1,1)$ if and only if all these eigenvalues are equal to $1$ if and only if $\sigma$ is not injective. This proves the equivalence between $(1)$ and $(2).$ Note that $P$ is necessarily Schur-positive since it is a character.

We proceed to show the equivalence of $(2)$ and $(3)$.
If each summand of $P$ has a total degree which is a multiple of three, then $P(\xi,\xi,\xi)=P(1,1,1)$; this shows that (2) implies (3).
To show the reverse direction, without loss of generality, assume $P\neq 0$ is a positive linear combination of Schur polynomials, none of which has total degree divisible by three.
Let $R(x) = P(x,x,x)$, which is a rational univariate polynomial in $\Q[x]$.
Note that $R$ has all coefficients positive and no term of degree divisible by three.
Now let $R'\in\Q[x]$ be the remainder of division of $R$ by $(x^3-1)$.
Then $R'\neq 0$, has no constant term, and satisfies $R'(1) = R(1)$ and $R'(\xi) = R(\xi)$.
We need to show $P(1,1,1) \neq P(\xi,\xi,\xi)$.
So let us assume the contrary.
Then $R'(\xi) = R'(1)$, so the polynomial $R'(x)-R'(1)$ must be a multiple of the minimal polynomial of $\xi$, namely $x^2+x+1$.
Since $R'$ has degree at most two, we have $R' = c(x^2+x+1)$ for some nonzero constant $c$, but that contradicts that $R'$ has no constant term.
We conclude that $P(1,1,1) \neq P(\xi,\xi,\xi)$, and this completes our proof.
\end{proof}

\begin{example}\label{exmp:byhand}
For simplicity, we denote the unitary representation of $S^1$ with character $\sum_i a_i z^i$ by the list $(i^{a_i}; i \in \Z)$; where we omit the entry of $i$ whenever $a_i=0.$ In particular, in this notation we have $\psi_{a,b,c} = (a,b,c)$.
We now revisit the example at the beginning of the introduction. The computation
$$(-1,0,1)^{\otimes 2} = (-2,-1^2,0^3,1^2,2) = (-2,1^2) \oplus (-2,1^2)^{*} \oplus (0)^{\oplus 3}$$
shows that the representations $\psi_{-1,0,1}$ and $\psi_{-2,1,1}$ can be amalgamated inside $\SU(9)$. In terms of polynomials, this corresponds to $P(x_1,x_2,x_3)=(x_1+x_2+x_3)^2$, $Q(x_1,x_2,x_3) = x_1+x_2+x_3+x_2x_3+x_1x_3+x_1x_2+3x_1x_2x_3$ and the identity
$P(z^{-1},1,z)=Q(z^{-2},z,z).$
Observe that $P=s_{1,1}+s_2$ and $Q=s_1+s_{1,1}+3s_{1,1,1}$.
In particular, both polynomials are Schur positive.
Moreover, $P(1,1,1)=Q(1,1,1)=9$ is the dimension of the representation.
Apart from the this example, which we were able to work out by hand, only few other cases seem suitable for pen and paper calculations.
\end{example}

It follows from the reasoning above that we can formulate Bergman's problem \cite[Question 20]{MR893156} in the first non-trivial case as follows:

\begin{quest}\label{quest:initial}
Given integer vectors $v = (v_1,v_2,v_3)$ and $w = (w_1,w_2,w_3)$ satisfying $v_1+v_2+v_3=w_1+w_2+w_3=0$ and $\gcd(v_1,v_2,v_3) = \gcd(w_1,w_2,w_3) = 1$, can we find Schur positive polynomials in three variables $P$ and $Q$ such that:
\begin{enumerate}
\item $P_v(z) = Q_w(z)$ and
\item $P(\xi,\xi,\xi) \neq P(1,1,1)$ and
\item $Q(\xi,\xi,\xi)\neq Q(1,1,1)$?
\end{enumerate}
\end{quest}

Our experiments suggest that the answer is always positive.

\section{Computations}

Now we recast Question~\ref{quest:initial} as a problem in polyhedral geometry and approach it computationally.
The source code and its output can be found on our \MathRepo page
\begin{equation}\label{eq:link}
  \text{\url{https://mathrepo.mis.mpg.de/CompactAmalgamation/index.html} }.
\end{equation}
To this end we fix $v, w \in \Z^3$ such that $v_1+v_2+v_3=w_1+w_2+w_3=0$ and $\gcd(v_1,v_2,v_3) = \gcd(w_1,w_2,w_3) = 1$.
Choosing an ordering for the Schur polynomials, we then make the problem finite by fixing a number $k$ and considering only the first $k$ Schur polynomials in three variables, denoted by $S_1,\dots,S_k\in\Q[x_1,x_2,x_3]$.
We search for $P = \lambda_1 S_1 + \dots + \lambda_k S_k$ and $Q = \mu_1 S_1 + \dots + \mu_k S_k$, where $\lambda_i, \mu_i$ are non-negative integers.
These polynomials lie in $\Q[x_1,x_2,x_3]$, and their substitutions $P_{v}(z)$ and $Q_{w}(z)$ are univariate Laurent polynomials.
The coefficients of the difference $P_{v}(z)-Q_w(z)$ are integer linear combinations of $\lambda_i$ and $\mu_i$.
Setting these coefficients to zero and letting $\lambda_i \geq 0$ and $\mu_i \geq 0$ defines a polyhedral cone in $\R^{2k}$.
We denote that cone $\cC=\cC_k(v,w)$.
Recall that $\cC$ depends on the chosen ordering of the Schur polynomials.
Throughout we assume that $S_1=s_{1,1,1}$ is the trivial representation.
It plays a special role, as $P = Q = \lambda_1 S_1$, for any $\lambda_1\geq 0$, is a trivial solution to (1) in Question~\ref{quest:initial}.

To find $P$ and $Q$, we consider the integer linear program
\begin{equation}\label{eq:ilp}\tag{ILP$_k$}
  \begin{array}{rl}
    \text{minimize} & c\cdot (\lambda,\mu)\\
    \text{subject to} & (\lambda,\mu) \in \cC_k(v,w) \\
    & \sum_{3 \not \; | \tdeg(S_i)} \lambda_i \geq 1 \\
    & \sum_{3 \not \; | \tdeg(S_i)} \mu_i \geq 1 \\
    & \lambda_1,\dots,\lambda_k,\mu_1,\dots,\mu_k\in\N \enspace ,
  \end{array}
\end{equation}
where $c\in\R_{>0}^{2k}$ is some strictly positive linear objective function, to be discussed below.
Let $\cP=\cP_k(v,w)$ be the feasible region of the linear relaxation of \eqref{eq:ilp}.

\begin{rmk}
 Conceptually, one could replace the  weak inequality
 $\sum_{3 \not \; | \tdeg(S_i)} \lambda_i \geq 1 $ by the strict inequality $\sum_{3 \not \; | \tdeg(S_i)} \lambda_i >0$, but the description as an (integer) linear program requires weak inequalities.
\end{rmk}

\begin{prop}
The feasible solutions of \eqref{eq:ilp}, i.e., the lattice points in $\cP$, are in bijection with those nontrivial solutions to Question~\ref{quest:initial} which can be written as a non-negative linear combination of the first $k$ Schur polynomials.
\end{prop}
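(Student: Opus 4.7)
The plan is to exhibit the bijection explicitly and then verify that each of the ILP constraints translates precisely into one of the conditions of Question~\ref{quest:initial}. Since the Schur polynomials $S_1,\ldots,S_k$ are linearly independent in $\Q[x_1,x_2,x_3]$, any Schur positive polynomial in their span has a unique representation as $\sum \lambda_i S_i$ with $\lambda_i \in \N$. This already gives the natural candidate map: send a lattice point $(\lambda,\mu) \in \cP_k(v,w) \cap \Z^{2k}$ to the pair
\[
P \; = \; \sum_{i=1}^k \lambda_i S_i, \qquad Q \; = \; \sum_{i=1}^k \mu_i S_i,
\]
and conversely.

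First I would verify that this assignment lands in the set of nontrivial solutions to Question~\ref{quest:initial}. Schur positivity of $P$ and $Q$ is immediate from $\lambda_i,\mu_i \in \N$. Condition (1) of the question, i.e.\ $P_v(z)=Q_w(z)$ in $\Q[z^\pm]$, holds precisely when all coefficients of $P_v(z)-Q_w(z)$ vanish; but these coefficients are by construction the integer linear forms in $(\lambda,\mu)$ that cut out the cone $\cC_k(v,w)$, so (1) is equivalent to $(\lambda,\mu)\in \cC_k(v,w)$. For conditions (2) and (3), I would apply Proposition~\ref{prop:injective}: the equivalence of items (2) and (3) there says that $P(\xi,\xi,\xi)\neq P(1,1,1)$ if and only if the Schur expansion of $P$ has at least one summand, with positive coefficient, whose total degree is not divisible by three. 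Since $\lambda_i \in \N$, this last condition is equivalent to $\sum_{3\nmid \tdeg(S_i)} \lambda_i \geq 1$, which is exactly the second inequality of \eqref{eq:ilp}; symmetrically for $Q$ and $\mu$.

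For the inverse direction I would start from a nontrivial solution $(P,Q)$ to Question~\ref{quest:initial} expressible as non-negative linear combinations of $S_1,\ldots,S_k$, read off the unique coefficient vectors $(\lambda,\mu)\in\N^{2k}$ in the Schur basis, and then run the same equivalences in reverse: condition (1) of the question forces $(\lambda,\mu)\in\cC_k(v,w)$, while conditions (2) and (3), via Proposition~\ref{prop:injective}, force the two sum inequalities. Injectivity of the map is clear because Schur polynomials are linearly independent.

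I do not expect a serious obstacle: the proof is essentially a dictionary argument, and all the substantive content has already been placed into Proposition~\ref{prop:injective}. The only mildly subtle point to check is that non-negative \emph{integer} coefficients really do correspond to bona fide characters of unitary representations (rather than just Schur positive polynomials with rational coefficients), but this is automatic since characters of $\SU(3)$ decompose over $\N$ in the Schur basis. I would flag this point briefly when setting up the bijection, and the rest of the argument reduces to reading the ILP constraints side-by-side with the conditions of Question~\ref{quest:initial}.
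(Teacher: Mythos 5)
Your proof is correct and follows essentially the same route as the paper: you identify the map $(\lambda,\mu)\mapsto(\sum\lambda_i S_i,\sum\mu_i S_i)$, note that membership in $\cC_k(v,w)$ is by construction equivalent to condition (1), and invoke the equivalence of items (2) and (3) in Proposition~\ref{prop:injective} to translate the two extra ILP inequalities into conditions (2) and (3) of Question~\ref{quest:initial}; the paper's proof is just a terse two-sentence version of this same dictionary. The remark about characters versus Schur-positive polynomials is not actually needed, since the proposition as stated is purely about Schur-positive polynomials and the representation-theoretic interpretation lives elsewhere, but it does no harm.
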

\begin{proof}
  Containment in the cone $\cC$ is equivalent to the condition (1) in Question~\ref{quest:initial}.
  The two additional constraints correspond to conditions (2) and (3); see Proposition~\ref{prop:injective}.
\end{proof}

\begin{rmk}\label{rmk:choices}
In practice, we make the following choices.
We order the $3$-variate Schur polynomials lexicographically:
a partition $(m+n,n)$, with $m, n\geq 0$, is less than another partition $(m'+n',n')$ if either $m+n < m'+n'$ or $m+n = m'+n'$ and $n < n'$; and the special partition $(1,1,1)$ is defined to be smaller than $(m+n,n)$ for arbitrary $m$ and $n$.
Moreover, we take the objective function $c=(c_i)$ with $c_i=\tdeg S_i$, where $\tdeg$ is the total degree.  So the optimal solutions are minimal with respect to dimension.
\end{rmk}
\noindent We abbreviate $(m)=(m,0)$.
\begin{example}
We consider $v=(-1,0,1)$ and $w=(-2,1,1)$ as in Example~\ref{exmp:byhand}, and we pick $k=4$.
Then the first four Schur polynomials correspond to the partitions $(1,1,1)$, $(1)$, $(1,1)$, and $(2)$.
So we have $S_1=x_1x_2x_3$, $S_2=x_1+x_2+x_3$, $S_3=x_1x_2+x_1x_3+x_2x_3$, and $S_4=x_1^2 + x_1x_2 + x_1x_3 + x_2^2 + x_2x_3 + x_3^2$.
Then
\[
\begin{split}
 P_v(z)-P_w(z) \ = \ (&\lambda_{4} - \mu_{3} - 3\mu_{4})z^2 + (\lambda_{2} + \lambda_{3} + \lambda_{4} - 2\mu_{2})z + \lambda_{1} + \lambda_{2} + \lambda_{3} + 2\lambda_{4} - \mu_{1}\\ &+ (\lambda_{2} + \lambda_{3} + \lambda_{4} - 2\mu_{3} - 2\mu_{4})z^{-1} + (\lambda_{4} - \mu_{2})z^{-2} - \mu_{4}z^{-4} \enspace .
\end{split}
\]
Consequently, the unbounded polyhedron $\cP$ in $\R^8$ is given by six homogeneous equations (from the coefficients of $P_v(z)-P_w(z)$, considered as a Laurent polynomial in $\Q[\lambda_1,\dots,\mu_4][z^\pm]$), the eight nonnegativity constraints and two affine inequalities (from forcing injectivity).
The polyhedron $\cP$ is $3$-dimensional.
Solving the integer linear program \eqref{eq:ilp} yields
\[
\lambda_1=\lambda_2=0\,,\; \lambda_3=\lambda_4=1 \quad \text{and} \quad \mu_1=3\,,\; \mu_2=\mu_3=1\,,\; \mu_4=0
\]
as an optimal solution of objective value $3+6=3+3+3=9$.
This recovers the pair of 9-dimensional representations given by $P=s_{1,1}+s_2$ and $Q=3s_{1,1,1}+s_1+s_{1,1}$ from Example~\ref{exmp:byhand}.
That pair of Schur positive polynomials corresponds to the lattice point marked $0011\, 3110$ in Figure~\ref{fig:byhand}.
Our visualization artificially truncates the feasible region at representation dimension ten.
We see two 9-dimensional solutions and two 10-dimensional ones. The solutions come in pairs since $s_{1,v}=s_{(1,1),v}$ for the special choice of $v=(-1,0,1)$.
The 10-dimensional solutions are obtained from the 9-dimensional solutions by adding a trivial representation.
In this way, the solution from Example \ref{exmp:byhand} explains all four solutions shown here.
\end{example}

\begin{figure}
  \centering
\begin{tikzpicture}[z  = {(-0.076cm,-0.9cm)},
                    x  = {(0.95cm,0.06cm)},
                    y  = {(0.29cm,-0.44cm)},
                    scale = 2,
                    color = {lightgray}]

  \coordinate (v0_pt) at (5.5, 0.5, 0);
  \coordinate (v1_pt) at (5.5, 0, 0.5);
  \coordinate (v2_pt) at (0, 0, 1.11111);
  \coordinate (v3_pt) at (0, 0, 0.5);
  \coordinate (v4_pt) at (0, 0.5, 0);
  \coordinate (v5_pt) at (0, 1.11111, 0);

  \definecolor{vertexcolor_pt}{rgb}{ 1 0 0 }

  \tikzstyle{vertexstyle_pt} = [] 

  \definecolor{facetcolor_pt}{rgb}{ 0.4666666667 0.9254901961 0.6196078431 }

  \definecolor{edgecolor_pt}{rgb}{ 0.5 0.5 0.5 }
  \tikzstyle{facetstyle_pt} = [fill=facetcolor_pt, fill opacity=0.1, draw=edgecolor_pt, line width=0.5 pt, line cap=round, line join=round]

  \draw[facetstyle_pt] (v0_pt) -- (v1_pt) -- (v3_pt) -- (v4_pt) -- (v0_pt) -- cycle;
  \draw[facetstyle_pt] (v4_pt) -- (v3_pt) -- (v2_pt) -- (v5_pt) -- (v4_pt) -- cycle;
  \draw[facetstyle_pt] (v3_pt) -- (v1_pt) -- (v2_pt) -- (v3_pt) -- cycle;
  \draw[facetstyle_pt] (v0_pt) -- (v4_pt) -- (v5_pt) -- (v0_pt) -- cycle;

   \node at (v3_pt) [vertexstyle_pt] {};
   \node at (v4_pt) [vertexstyle_pt] {};

  \draw[facetstyle_pt] (v2_pt) -- (v1_pt) -- (v0_pt) -- (v5_pt) -- (v2_pt) -- cycle;

  \foreach \i in {2,5,1,0} {
    \node at (v\i_pt) [vertexstyle_pt] { };
  }

  \coordinate (v0_trunc) at (5.5, 0.5, 0);
  \coordinate (v1_trunc) at (5.5, 0, 0.5);
  \coordinate (v2_trunc) at (0, 0, 1.11111);
  \coordinate (v3_trunc) at (0, 1.11111, 0);

  \definecolor{vertexcolor_trunc}{rgb}{ 1 0 0 }

  \tikzstyle{vertexstyle_trunc} = [] 

  \definecolor{facetcolor_trunc}{rgb}{ 1 0 0 }

  \definecolor{edgecolor_trunc}{rgb}{ 0 0 0 }
  \tikzstyle{facetstyle_trunc} = [fill=facetcolor_trunc, fill opacity=0.2] 

  \draw[facetstyle_trunc] (v2_trunc) -- (v1_trunc) -- (v0_trunc) -- (v3_trunc) -- (v2_trunc) -- cycle;

  \foreach \i in {2,3,1,0} {
    \node at (v\i_trunc) [vertexstyle_trunc] {};
  }

  \coordinate (v0_lat_points) at (0, 0, 1);
  \coordinate (v1_lat_points) at (0, 1, 0);
  \coordinate (v2_lat_points) at (1, 0, 1);
  \coordinate (v3_lat_points) at (1, 1, 0);

  \definecolor{vertexcolor_lat_points}{rgb}{ 1 0 0 }

  \tikzstyle{vertexstyle_lat_points_0} = [circle, scale=0.25, fill=vertexcolor_lat_points,label={[text=black, below left, align=right]:0011 3110},]
  \tikzstyle{vertexstyle_lat_points_1} = [circle, scale=0.25, fill=vertexcolor_lat_points,label={[text=black, above left, align=right]:0101 3110},]
  \tikzstyle{vertexstyle_lat_points_2} = [circle, scale=0.25, fill=vertexcolor_lat_points,label={[text=black, below right, align=left]:1011 4110},]
  \tikzstyle{vertexstyle_lat_points_3} = [circle, scale=0.25, fill=vertexcolor_lat_points,label={[text=black, above right, align=left]:1101 4110},]

  \definecolor{facetcolor_lat_points}{rgb}{ 0 0 1 }

  \definecolor{edgecolor_lat_points}{rgb}{ 0 0 0 }
  \tikzstyle{facetstyle_lat_points} = [fill=facetcolor_lat_points, fill opacity=0.5, draw=edgecolor_lat_points, line width=1 pt, line cap=round, line join=round]

  \draw[facetstyle_lat_points] (v0_lat_points) -- (v2_lat_points) -- (v3_lat_points) -- (v1_lat_points) -- (v0_lat_points) -- cycle;

  \foreach \i in {0,2,1,3} {
    \node at (v\i_lat_points) [vertexstyle_lat_points_\i] {};
  }

  \draw[facetstyle_pt] (v1_pt) -- (v3_pt);
  
\end{tikzpicture}
    \caption{Four integral points in $\cP_4(v,w)$ for $v=(-1,0,1)$ and $w=(-2,1,1)$.
    Visualized with \polymake \cite{DMV:polymake}; hyperplane for artificial truncation at representation dimension 10 marked red.}
    \label{fig:byhand}
\end{figure}
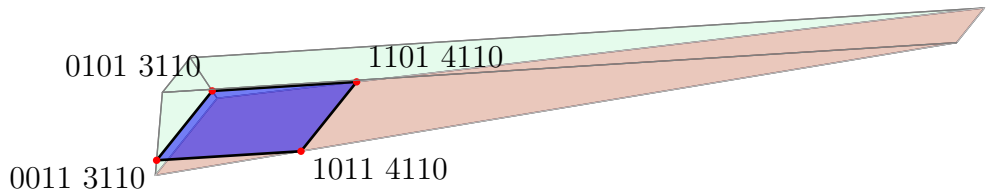

Solving integer linear programs is generally hard, both theoretically and in practice \cite{Schrijver:TOLIP}.
However, our integer linear program \eqref{eq:ilp} has a particularly simple structure, which can be exploited computationally.
\begin{lem}
Let $(\lambda,\mu)\in\Q^{2k}$ be a rational point in $\cP$.
Then there is a positive integer $\ell>0$ such that $(\ell\cdot\lambda,\ell\cdot\mu)$ is a point in $\cP$ which is integral.
\end{lem}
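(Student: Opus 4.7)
The plan is to take $\ell$ to be the least common multiple of the denominators of the $2k$ rational coordinates $\lambda_1,\dots,\lambda_k,\mu_1,\dots,\mu_k$. By construction $(\ell\lambda,\ell\mu)\in\Z^{2k}$, so the only thing left to verify is that scaling by a positive integer preserves membership in $\cP$.

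The key observation is that the defining inequalities of $\cP$ split into two kinds. The first kind is \emph{homogeneous}: these consist of the linear equations obtained by setting each coefficient of the Laurent polynomial $P_v(z)-Q_w(z)\in\Q[\lambda_1,\dots,\mu_k][z^\pm]$ to zero (equivalently, membership in the cone $\cC_k(v,w)$), together with the $2k$ nonnegativity constraints $\lambda_i,\mu_i\geq 0$. Since $(\lambda,\mu)$ satisfies each of these and they are preserved under multiplication by any positive scalar, so does $(\ell\lambda,\ell\mu)$. The second kind consists of the two \emph{affine} inequalities
\[
\sum_{3\,\nmid\,\tdeg(S_i)}\lambda_i\ \geq\ 1 \qquad\text{and}\qquad \sum_{3\,\nmid\,\tdeg(S_i)}\mu_i\ \geq\ 1.
\]
Here I would use that $\ell$ is a positive integer, hence $\ell\geq 1$: multiplying each of these inequalities through by $\ell$ yields sums $\geq \ell\geq 1$, so they hold for $(\ell\lambda,\ell\mu)$ as well.

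Combining the two cases shows that $(\ell\lambda,\ell\mu)$ satisfies every defining inequality of $\cP$, which completes the argument. There is no real obstacle: the statement reflects the elementary fact that $\cP$ is obtained from the rational cone $\cC_k(v,w)$ by intersecting with two half-spaces of the form $\{L\geq 1\}$ where $L$ is a linear form with nonnegative integer coefficients; such a region is closed under scaling by any positive integer $\ell\geq 1$.
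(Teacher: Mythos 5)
Your proof is correct and follows essentially the same approach as the paper: take $\ell$ to be the common denominator, observe that scaling preserves membership in the cone $\cC$, and use $\ell\geq 1$ to see that the two affine half-space constraints survive scaling.
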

\begin{proof}
  Let $\ell$ be the common denominator of $\lambda_1,\lambda_2,\dots,\mu_k$.
  Then $(\ell\cdot\lambda,\ell\cdot\mu)$ is integral.
  The polyhedron $\cP$ is the intersection of the cone $\cC$ with two additional affine halfspaces.
  Clearly, $(\ell\cdot\lambda,\ell\cdot\mu)$ lies in $\cC$.
  Further, we have $\ell \cdot \sum_{3 \not \; | \tdeg(S_i)} \lambda_i \geq \ell \geq 1$, and similarly for the other inequality.
  Thus the point $(\ell\cdot\lambda,\ell\cdot\mu)$ lies in $\cP\cap\Z^{2k}$.
\end{proof}
As a consequence, the integer linear program \eqref{eq:ilp} is feasible if and only if its linear relaxation is.
The latter condition can be tested much faster.
Consequently, standard complexity bounds in linear optimization entail the following result; see \cites{GroetschelLovaszSchrijver93,Renegar:2001}.
\begin{prop}
Employing the interior point method, deciding the feasibility of the integer linear program \eqref{eq:ilp} takes polynomial time in the five parameters $k$, $\log |v_1|$, $\log |v_2|$, $\log |w_1|$, and $\log |w_2|$.
\end{prop}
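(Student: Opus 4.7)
The plan is to reduce $\eqref{eq:ilp}$ to a linear program of polynomial encoding size and then invoke a standard polynomial-time LP algorithm. By the preceding lemma, $\eqref{eq:ilp}$ is feasible if and only if its linear relaxation is, so it suffices to test feasibility of the LP cutting out $\cP_k(v,w)$. Renegar's interior point method decides LP feasibility in time polynomial in the binary encoding length of the instance, so the whole task reduces to estimating that encoding length, together with the cost of constructing the LP, in the five stated parameters.

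Next, I would bound the size of the LP itself. It has $2k$ variables. Its equality constraints arise from setting each coefficient of $P_v(z) - Q_w(z) \in \Q[z^{\pm}]$ to zero, i.e., one equation per distinct exponent of $z$ supported in $P_v$ or $Q_w$. In the lexicographic ordering of Remark~\ref{rmk:choices}, the first $k$ Schur polynomials correspond to partitions $(m+n,n)$ whose first parts $m+n$ (and hence total degrees and numbers of monomials) are bounded by a polynomial in $k$; summing over $i$, the total number of monomials in $x_1,x_2,x_3$ appearing in $P$ or $Q$ is polynomial in $k$. Since distinct exponents of $z$ in $P_v - Q_w$ can only arise from distinct such monomials, the number of equality constraints is polynomial in $k$ as well. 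Each constraint coefficient is a non-negative integer counting monomials of a single $S_i$ of prescribed multidegree, hence at most $\dim \Gamma_\lambda$, which is itself polynomial in $k$. The $2k$ non-negativity and $2$ injectivity constraints have coefficients and right-hand sides in $\{0,1\}$. Therefore the LP has binary encoding length polynomial in $k$ alone.

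The four log-parameters $\log|v_1|, \log|v_2|, \log|w_1|, \log|w_2|$ enter only through the construction of the LP: for each monomial $x_1^{a_1}x_2^{a_2}x_3^{a_3}$ of some $S_i$, one computes the induced exponents $\sum_j a_j v_j$ and $\sum_j a_j w_j$, then groups monomials by exponent to assemble the equality constraints. These exponents have absolute value at most $O(\sqrt{k})\cdot\max(|v_i|,|w_i|)$, and hence fit in $O(\log k + \log\max(|v_i|,|w_i|))$ bits, so the setup cost is polynomial in the five parameters. Combined with Renegar's polynomial bound on LP feasibility testing, this yields the claim.

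The delicate point is the polynomial-in-$k$ bound on the number of equality constraints, which must be \emph{independent} of $|v_i|$ and $|w_i|$. Although individual exponents of $z$ can grow linearly with $|v_i|, |w_i|$, the number of \emph{distinct} exponents in $P_v - Q_w$ is bounded by the number of distinct monomials in $x_1,x_2,x_3$ appearing in $P$ and $Q$, a quantity depending only on $k$. Without this collapse the LP's bit length would itself blow up with $|v_i|,|w_i|$, and the argument would not give the desired polynomial dependence.
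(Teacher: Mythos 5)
Your proposal is correct and follows the same route the paper takes: the preceding lemma reduces ILP feasibility to LP feasibility, and then one cites standard complexity bounds for linear programming (Gr\"otschel--Lov\'asz--Schrijver, Renegar). The paper presents this proposition without a written proof beyond that reduction and citation; you supply the missing encoding-length analysis (polynomially many constraints in $k$ because the number of distinct $z$-exponents is at most the number of distinct monomials in $S_1,\dots,S_k$, and the remaining parameters only affect the cost of computing and grouping exponents), which is exactly the detail needed to see that the dependence is on $k$ and on $\log|v_i|,\log|w_i|$ rather than on $|v_i|,|w_i|$ themselves.
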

Recall the condition $v_1+v_2+v_3=0=w_1+w_2+w_3$, whence $v_3$ and $w_3$ are not mentioned.
Now we can summarize how to address Question~\ref{quest:initial} computationally.
First we pick some integer $k$.
Then we decide the feasibility of \eqref{eq:ilp} by solving the linear relaxation.
If this is feasible we use a bisection to find the minimal $k'$ such that (ILP$_{k'}$) is feasible.
If it is infeasible we try $2k$ and repeat.
Of course, this procedure does not terminate if no solution exists.
Yet that did not occur so far.

There are many implementations of algorithms for linear and integer optimization available, both open source and commercial.
Yet the majority employs floating-point arithmetic, which may lead to errors, which in turn makes these software systems less suited for obtaining mathematical results.
For this reason we use \scip, which implements the simplex method in exact rational arithmetic \cite{scip}.
Setting up the (integer) linear program \eqref{eq:ilp} is done in \OSCAR, which provides partitions, Schur polynomials and the necessary commutative algebra \cites{OSCAR-book,OSCAR}.
\OSCAR also inherits the full functionality of \polymake \cite{DMV:polymake}, which includes exact rational integer linear programming.
While \scip is much faster at integer linear programming, that implementation is based on floating-point arithmetic.

\begin{table}[th]
\caption{Minimal $k$ for which $\cP_k(v,w)$ is feasible, where $v=(1,v_2,-1-v_2)$ and $w=(1,w_2,-1-w_2)$.
Empty fields on the upper right are beyond our current reach computationally.}
\label{table: min k}
\renewcommand{\arraystretch}{0.9}
\begin{tabular*}{.8\linewidth}{@{\extracolsep{\fill}}rrrrrrrrrrrr@{}}
\toprule
$w_2\backslash v_2$ & \multicolumn{1}{c}{0} & \multicolumn{1}{c}{1} & \multicolumn{1}{c}{2} & \multicolumn{1}{c}{3} & \multicolumn{1}{c}{4} & \multicolumn{1}{c}{5} & \multicolumn{1}{c}{6} & \multicolumn{1}{c}{7} & \multicolumn{1}{c}{8} & \multicolumn{1}{c}{9} & \multicolumn{1}{c}{10} \\
\midrule
0 &  & 4 & 16 & 191 & 601 & 1541 &  &  &  &  &  \\
1 &  &  & 13 & 33 & 106 & 336 & 686 & 1254 & 2187 &  &  \\
2 &  &  &  & 21 & 50 & 125 & 305 & 586 & 1006 & 1574 &  \\
3 &  &  &  &  & 28 & 66 & 170 & 292 & 535 & 820 & 1283 \\
4 &  &  &  &  &  & 44 & 86 & 174 & 307 & 463 & 824 \\
5 &  &  &  &  &  &  & 61 & 120 & 238 & 377 & 525 \\
6 &  &  &  &  &  &  &  & 87 & 171 & 275 & 430 \\
7 &  &  &  &  &  &  &  &  & 115 & 245 & 333 \\
8 &  &  &  &  &  &  &  &  &  & 145 & 291 \\
9 &  &  &  &  &  &  &  &  &  &  & 171 \\
\bottomrule
\end{tabular*}
\end{table}

\subsection*{Feasibility}
For our first experiment, we consider pairs of vectors $v=(v_1,v_2,-v_1-v_2)$ and $w=(w_1,w_2,-w_1-w_2)$ such that $v_1=w_1=1$.
Such a pair $(v,w)$ is determined by the pair $(v_2,w_2)$ of integers.
In Table \ref{table: min k}, we give the minimal values of $k$ for which $\cP_k(v,w)$ is feasible, which we compute by solving the linear relaxation of \eqref{eq:ilp}.
As pointed out in Remark~\ref{rmk:choices}, the parameter $k$ refers to the lexicographic ordering of the Schur polynomials.
That ordering does affect the value of $k$.
That is to say, replacing the pure lexicographic ordering by, e.g., the graded lexicographic ordering may lead to a lower value of $k$.
It is unclear whether one ordering is better than another.

\subsection*{Representation dimensions}
For our second experiment we actually solve the integer linear program \eqref{eq:ilp}.
We take the objective function $$c = \bigl(S_1(1,1,1),\dots,S_k(1,1,1)\bigr)$$ to be the dimension of the representation corresponding to $P$; see Remark~\ref{rmk:choices}.
Table~\ref{table: dim} records pairs of vectors, the minimal value of $k$ such that $\cP_k(v,w)$ is feasible and the optimal value of the integer linear program, i.e., the smallest dimension achieved by solutions using only the first $k$ Schur polynomials.
Each row of that table corresponds to one entry in Table~\ref{table: min k}.
The explicit Schur positive symmetric polynomials whose dimensions are recorded in Column 4 of Table~\ref{table: dim} can be found on our \MathRepo page~(\ref{eq:link}) alongside the source code.

\begin{table}[th]
\caption{Minimal $k$ for which $\cP_k(v,w)$ is feasible and the dimension of the representation that corresponds to an optimal integral solution}
\label{table: dim}
   \begin{tabular*}{.75\linewidth}{@{\extracolsep{\fill}}ccrr@{}}
   \toprule
    $v$ & $w$ & $k$ & Dimension \\
    \midrule
    $(1,0,-1)$ & $(1,1,-2)$ & 4 & 9 \\
    $(1,0,-1)$ & $(1,2,-3)$ & 16  & 21 \\
    $(1,1,-2)$ & $(1,2,-3)$ & 13 & 63 \\
    $(1,1,-2)$ & $(1,3,-4)$ & 33 & 834 \\
    $(1,1,-2)$ & $(1,4,-5)$ & 106 & 3216 \\
    $(1,2,-3)$ & $(1,3,-4)$ & 21 & 255 \\
    $(1,2,-3)$ & $(1,5,-6)$ & 125 & 13561 \\
    $(1,3,-4)$ & $(1,4,-5)$ & 28 & 454 \\
    $(1,3,-4)$ & $(1,5,-6)$ & 66 & 6852 \\
    $(1,4,-5)$ & $(1,5,-6)$ & 44 & 1526 \\
    $(1,4,-5)$ & $(1,6,-7)$ & 86 & 83113 \\
    $(1,5,-6)$ & $(1,6,-7)$ & 61 & 14972 \\
    $(1,5,-6)$ & $(1,7,-8)$ & 120 & 316170 \\
    $(1,6,-7)$ & $(1,7,-8)$ & 87 & 128624 \\
    $(1,7,-8)$ & $(1,8,-9)$ & 115 & 108468 \\
    \bottomrule
    \end{tabular*}
 \end{table}

Note that the dimensions recorded in Table \ref{table: dim} might not be minimal among all solutions since they use only the first $k$ Schur polynomials; allowing the use of more Schur polynomials can potential provide a solution with smaller dimension.

\subsection*{Running times}
We briefly comment on the computation time of Table~\ref{table: min k} and Table~\ref{table: dim}.
Computing all entries in Table~\ref{table: min k} took in total approximately 400,000 seconds (4.6 days).
Optimal solutions in Table~\ref{table: dim} are computed in \scip, via floating-point arithmetic, and then verified in \OSCAR, via exact arithmetic.
Verification is fast and succeeded in all our cases.
The longest computation was for the pair $(1,5,-6)$ and $(1,7,-8)$, which took 312 seconds in \scip.
Computations for pairs with $k > 125$ did not terminate within a day.
  
All computations were done on the computer server Hydra at the MPI MiS, with the following system specifics: 4x16-core Intel Xeon E7-8867 v3 CPU (3300 MHz) on Debian GNU/Linux 5.10.149-2 (2022-10-21) x86\_64.

\begin{rmk}
    In principle, the optimal (rational) solutions to the linear programming relaxations leading to Table~\ref{table: min k} yield an upper bound on the smallest dimension of a representation of the amalgamation problem Question~\ref{quest:initial}. However, these numbers are excessively large. For example, for $v=(1,9,-10)$ and $w=(1,10,-11)$ the bound we obtain is $2382041666750207$. This is one of the smaller ones. Therefore, it is not desirable to provide a complete table here. However, using the \texttt{Jupyter} notebook available on the \MathRepo page (\ref{eq:link}) the interested reader can compute some of these numbers by themselves.
\end{rmk}

\section{A relaxed problem}
In this section, we consider the following relaxed problem by dropping the Schur positivity condition and disregarding the case $(-1,0,1)$.
Recall that the Schur polynomials form a basis of the space of all symmetric polynomials; see \cite[§A.1]{MR1153249}.

\begin{quest}\label{quest:initial2}
Given $v = (v_1,v_2,v_3)$ such that $v_1+v_2+v_3=0$, $v_1v_2v_3\neq0$ 
and $\gcd(v_1,v_2,v_3) = 1$. For which Laurent polynomials $F\in\Q[z^\pm]$ can we find a symmetric polynomial in three variables $P$ that $F=P_v(z)$?
\end{quest}

\begin{rmk}
 We pose the additional condition $v_1v_2v_3\neq0$, which excludes the case $(v_1,v_2,v_3)=(-1,0,1)$, because our argument does not apply to that case, see Remark~\ref{rem:exludecase}.
\end{rmk}

From now on fix a triplet $v = (v_1,v_2,v_3)$ such that $v_1+v_2+v_3=0$, $v_1v_2v_3\neq0$ and $\gcd(v_1,v_2,v_3) = 1$.
Since every symmetric polynomial in three variables can be written as a polynomial in the first three elementary symmetric polynomials $$e_1=x_1+x_2+x_3 ,\ e_2=x_1x_2+x_1x_3+x_2x_3,\ e_3=x_1x_2x_3$$ in $\Q[x_1,x_2,x_3]$, and because $v_1+v_2+v_3=0$ implies that $(e_3)_v(z)=1$, answering Question \ref{quest:initial2} amounts to characterizing the $\Q$-subalgebra $A(v)$ of the Laurent polynomial ring $\Q[z^\pm]$ generated by $$F_1:=(e_1)_v(z)=z^{v_1}+z^{v_2}+z^{v_3} \quad \text{and} \quad F_2:=(e_2)_v(z)=z^{v_1+v_2}+z^{v_1+v_3}+z^{v_2+v_3}.$$
Since we have $F_1'(1)=F_2'(1)=0$, the product rule implies that $F'(1)=0$ holds for all $F\in A(v)$.
This shows that $A(v)$ is a proper subalgebra of $\Q[z^\pm]$.
As the next example shows, this is in general not the only constraint.

\begin{example}\label{ex:3droot}
  Consider the case $v=(1,1,-2)$, and let $\xi\in\C$ be a primitive third root of unity.
  We have $F_1'=2\cdot(1-z^{-3})$ and $F_2'=2z\cdot(1-z^{-3})$ and this shows $F_1'(\xi)=F_2'(\xi)=0$.
  Again this shows that $F'(\xi)=0$ for all $F\in A(1,1,-2)$. One can prove that these are all constraints in this case:
  \begin{equation*}
    A(1,1,-2)=\{F\in\Q[z^\pm]\mid F'(1)=F'(\xi)=F'(\xi^2)=0 \}.
  \end{equation*}
\end{example}

Our main contribution in this section is the following rather technical result which says that $A(v)$ can, in general, be characterized by conditions similar as in Example \ref{ex:3droot}.

\begin{thm}\label{thm:main}
 There is a product $\Phi\in\Q[z]$ of cyclotomic polynomials with $\Phi(1)\neq0$ and a subalgebra $C$ of $\Q[z^\pm]/(\Phi)$ such that for $F\in\Q[z^\pm]$ the following are equivalent:
 \begin{enumerate}
  \item There is a symmetric polynomial $P$ in three variables with rational coefficients such that $F=P_v(z)$.
  \item We have $F'(1)=0$, and the residue class of $F$ modulo $\Phi$ is in $C$.
 \end{enumerate}
\end{thm}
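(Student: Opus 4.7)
I plan to identify $\Phi$ as the cyclotomic part of the conductor ideal of the finite ring inclusion $A := A(v) = \Q[F_1, F_2] \hookrightarrow \Q[z^\pm]$. Roughly, I will (i) check that $\Q[z^\pm]$ is finite over $A$ with the same fraction field, so the conductor $\mathfrak{c} = (\Psi)$ is a nonzero principal ideal of $\Q[z^\pm]$; (ii) show every root of $\Psi$ is a root of unity, so $\Psi$ is a product of cyclotomic polynomials; (iii) factor off the $(z-1)^2$ component (which will correspond exactly to $F'(1) = 0$) and use a Chinese Remainder / idempotent argument to package the remaining data as a single residue condition modulo $\Phi$.

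\noindent\textbf{Step 1 (fraction field and conductor).} To prove $\Q(F_1, F_2) = \Q(z)$, I would analyze $K$-embeddings $\sigma : \Q(z) \hookrightarrow \overline{\Q(z)}$ for $K := \Q(F_1, F_2)$. Any such $\sigma$ preserves the multiset $\{z^{v_i}\}$, so $\sigma(z)^{v_i} = z^{v_{\pi(i)}}$ for some $\pi \in S_3$, and using $\gcd(v_i)=1$ this forces $\sigma(z) = z^m$ with $m v_i = v_{\pi(i)}$ for all $i$. A short case analysis on $\pi$, using $v_1 v_2 v_3 \neq 0$, leaves only $(m,\pi) = (1,\mathrm{id})$; the identity is thus the only $K$-embedding and $K = \Q(z)$. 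The identity $z^{-v_3} F_1 = z^{v_1-v_3} + z^{v_2-v_3} + 1$ (for $v_3 = \min_i v_i$) also exhibits $z$ as integral over $A$, and similarly $z^{-1}$ is integral via $F_2(z) = F_1(z^{-1})$; so $\Q[z^\pm]$ is a finite $A$-module in $\mathrm{Frac}(A)$ and the conductor $\mathfrak{c} = \{f \in \Q[z^\pm] : f \cdot \Q[z^\pm] \subseteq A\}$ is a nonzero principal ideal of the PID $\Q[z^\pm]$. Write $\mathfrak{c} = (\Psi)$ with $\Psi \in \Q[z]$, $\Psi(0) \neq 0$; by construction $A = \{F \in \Q[z^\pm] : F \bmod \Psi \in A/\mathfrak{c}\}$.

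\noindent\textbf{Step 2 (roots of $\Psi$ are roots of unity).} A zero $z_0 \in \bar\Q^*$ of $\Psi$ lies above a singular point of $\mathrm{Spec}(A)$, so arises either from a multi-fiber $(F_1(z_0'), F_2(z_0')) = (F_1(z_0), F_2(z_0))$ with $z_0' \neq z_0$, or from ramification $F_1'(z_0) = F_2'(z_0) = 0$. Multi-fibers are reduced to roots of unity by the Step~1 embedding argument. For ramification, $F_2(z) = F_1(z^{-1})$ turns the system into $F_1'(z_0) = F_1'(z_0^{-1}) = 0$; setting $u_i := z_0^{v_i}$, $p := u_1 u_2$, $s := u_1 + u_2$ and using $u_1 u_2 u_3 = 1$, these become $v_1 u_1 + v_2 u_2 = (v_1+v_2)/p$ and $v_2 u_1 + v_1 u_2 = (v_1+v_2)p^2$. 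Adding and using $v_1 + v_2 = -v_3 \neq 0$ gives $s = p^2 + 1/p$; squaring the subtracted equation and combining with $(u_1-u_2)^2 = s^2 - 4p = ((p^3-1)/p)^2$ yields $4v_1 v_2 (p^3-1)^2 = 0$. Since $v_1 v_2 \neq 0$ we conclude $p^3 = 1$, whence the discriminant vanishes and $u_1 = u_2 = p^2 = u_3$. Therefore $z_0^{v_i - v_j} = 1$ for all $i,j$, so $z_0$ is a root of unity. Hence every root of $\Psi$ is a root of unity, and $\Psi$ factors over $\Q$ (up to a scalar) as a product of cyclotomic polynomials.

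\noindent\textbf{Step 3 (separation and conclusion).} Taylor expanding at $z = 1$ gives $F_i(z) = 3 + \tfrac{1}{2}(v_1^2 + v_2^2 + v_3^2)(z-1)^2 + O((z-1)^3)$ and $F_1(z) - F_2(z) = v_1 v_2 v_3 (z-1)^3 + O((z-1)^4)$, with both leading coefficients nonzero. These generate $(z-1)^2\Q[[z-1]]$ in the completion at $z=1$, so $\widehat A_{(z=1)} = \Q + (z-1)^2\Q[[z-1]]$ with local conductor $(z-1)^2\Q[[z-1]]$; hence $(z-1)^2$ divides $\Psi$ exactly. Writing $\Psi = (z-1)^2 \Phi$, the factor $\Phi$ is a product of cyclotomic polynomials different from $\Phi_1$, so $\Phi(1) \neq 0$. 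By CRT, $\Q[z^\pm]/(\Psi) \cong \Q[z]/(z-1)^2 \times \Q[z^\pm]/\Phi$, and the image $C'$ of $A$ in this product projects to $\Q$ in the first factor (by the local computation) and to some subalgebra $C$ in the second. For these two conditions to be independent---that is, $C' = \Q \times C$---I would note that $(F_1(1), F_2(1)) = (3,3)$ is distinct from the image of every other root of unity $\zeta \neq 1$: indeed $F_1(\zeta) = 3$ is a sum of three unit complex numbers equalling $3$, which forces $\zeta^{v_i} = 1$ for every $i$, so $\zeta^{\gcd(v_i)} = \zeta = 1$. Hence $\mathrm{Spec}(A/\mathfrak{c})$ decomposes into the isolated point $(3,3)$ and the rest, yielding the idempotent decomposition $A/\mathfrak{c} = \Q \times C$. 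Combining Steps~1--3 gives $F \in A \iff F'(1) = 0$ and $F \bmod \Phi \in C$, as claimed. The main obstacle is the algebraic manipulation in Step~2 leading to the identity $4v_1 v_2 (p^3-1)^2 = 0$; this is precisely where the hypothesis $v_1 v_2 v_3 \neq 0$ is used, and it is why the excluded case $v = (-1,0,1)$ of Remark~\ref{rem:exludecase} falls outside the argument.
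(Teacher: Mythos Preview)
Your overall architecture is exactly the paper's: identify $A(v)\subset\Q[z^{\pm}]$ as a finite birational extension, take the conductor $(\Psi)$, show its roots are roots of unity, split off the $(z-1)^2$ factor, and conclude via the Chinese Remainder Theorem. Where you genuinely diverge is in the proofs of the supporting lemmas. The paper establishes that multi-fibers only occur at roots of unity by an analytic argument (Lemma~\ref{lem:realfiber}: if $|x|\neq1$ the ordering of $|x|^{v_i}$ forces injectivity), whereas you argue algebraically via $K$-embeddings and the permutation constraint $m v_i=v_{\pi(i)}$. For ramification the paper uses the power-sum trick of Proposition~\ref{prop:rami} (all $\varphi_n\circ f$ ramify, so $\{x^k\}$ is finite), while your elimination leading to $4v_1v_2(p^3-1)^2=0$ is a direct and rather elegant alternative that pinpoints precisely where $v_1v_2v_3\neq0$ enters. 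Finally, for the exponent at $z=1$ the paper invokes Fulton's computation of the conductor of an ordinary cusp (Lemma~\ref{lem:cusp}), whereas your numerical-semigroup argument from the valuations $2$ and $3$ of $F_1-3$ and $F_1-F_2$ is self-contained. Your route is more elementary and avoids the external references; the paper's route is shorter once those references are granted.

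One small repair: the displayed identity $z^{-v_3}F_1=z^{v_1-v_3}+z^{v_2-v_3}+1$ does not in general give a \emph{monic} relation for $z$ over $A$ (e.g.\ when $v_1=v_2$ the top coefficient is $2$). The clean argument, which the paper uses in Lemma~\ref{lem:finite}, is that each $z^{v_i}$ satisfies the monic cubic $t^3-F_1t^2+F_2t-1=0$, hence is integral over $A$, and then $z^{\pm1}$ is a product of these via a B\'ezout relation $\sum a_iv_i=\pm1$. With that adjustment your Step~1 goes through.
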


\begin{rmk}
 The subalgebra $C$ of $\Q[z^\pm]/(\Phi)$ in Theorem \ref{thm:main} is the one generated by the residue classes of $F_1$ and $F_2$. Since $\Q[z^\pm]/(\Phi)$ is a finite dimensional $\Q$-vector space, this can be explicitly calculated once knowing $\Phi$.
\end{rmk}

Before we will give a proof of Theorem \ref{thm:main} we point out some consequences that are less technical.
    
\begin{cor}\label{cor:vanishingcrit}
 There are finitely many roots of unity $\zeta_1,\ldots,\zeta_r\in\C\setminus\{1\}$ and natural numbers $a_1,\ldots,a_r$ such that every $F\in\Q[z^\pm]$ with $F'(1)=0$ which vanishes at $\zeta_i$ with multiplicity at least $a_i$ for $i=1,\ldots,r$ can be expressed as $F=P_v(z)$ for some symmetric polynomial $P$ in three variables.
\end{cor}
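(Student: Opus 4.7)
The plan is to derive Corollary~\ref{cor:vanishingcrit} as a direct consequence of Theorem~\ref{thm:main}. Let $\Phi$ and $C$ be as in Theorem~\ref{thm:main}. I take $\zeta_1,\ldots,\zeta_r$ to be the distinct complex roots of $\Phi$, each of which is a root of unity different from $1$ because $\Phi$ is a product of cyclotomic polynomials and $\Phi(1)\neq 0$. For each $i$, let $a_i$ be the multiplicity of $\zeta_i$ as a root of $\Phi$.

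Given any $F\in\Q[z^\pm]$ satisfying the vanishing hypotheses, the key step is to show that $\Phi$ divides $F$ in the Laurent polynomial ring $\Q[z^\pm]$. Write $F=z^{-N}G$ with $N\geq 0$ and $G\in\Q[z]$. Since every root of unity is nonzero, the vanishing order of $F$ at each $\zeta_i$ coincides with that of $G$. Hence $G$ is divisible by $\prod_i (z-\zeta_i)^{a_i}=\Phi$ in $\C[z]$, and since $\Phi\in\Q[z]$ this divisibility already holds in $\Q[z]$. Multiplying by $z^{-N}$ we conclude $\Phi\mid F$ in $\Q[z^\pm]$.

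Consequently the residue class of $F$ modulo $\Phi$ is zero, which trivially lies in the subalgebra $C$. Together with the standing assumption $F'(1)=0$, condition~(2) of Theorem~\ref{thm:main} is satisfied, which produces the desired symmetric polynomial $P$ with $F=P_v(z)$. There is no real obstacle here, since the work has been done by Theorem~\ref{thm:main}; what remains is merely the elementary observation that imposing sufficiently high vanishing at the finitely many (nonzero) roots of $\Phi$ annihilates the residue class modulo~$\Phi$.
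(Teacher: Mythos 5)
Your proof is correct and follows essentially the same route as the paper's: factor $\Phi$, read off the roots of unity and multiplicities, observe that the vanishing hypotheses force $\Phi\mid F$ and hence the residue class of $F$ modulo $\Phi$ is zero, which lies in $C$, and then apply Theorem~\ref{thm:main} using the standing condition $F'(1)=0$. The only difference is a cosmetic one: you spell out the passage from divisibility in $\C[z]$ to divisibility in $\Q[z^\pm]$, which the paper leaves implicit.
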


\begin{proof}
 Let $\Phi\in\Q[z]$ the polynomial from Theorem \ref{thm:main} and let
 \begin{equation*}
  \Phi=\Phi_1^{a_1}\cdots \Phi_s^{a_s}
 \end{equation*}
 where the $\Phi_i$ are pairwise coprime cyclotomic polynomials. If $F\in\Q[z^\pm]$ vanishes at the zeros of each $\Phi_i$ with multiplicity at least $a_i$, then $F$ is divisible by $\Phi$. Thus the residue class of $F$ modulo $\Phi$ is zero and hence contained in every subalgebra of $\Q[z^\pm]/(\Phi)$.
\end{proof}

\begin{cor}\label{cor:concrete}
 There are natural numbers $a_0,b_0>0$ such that for all $a\geq a_0$, all $b$ divisible by $b_0$ we have
 \begin{equation}\label{eq:F_nm}
   F_{a,b}=(1+z+\cdots+z^{b-1})^a\cdot (1+z^{-1}+\cdots+z^{-(b-1)})^a\in A(v).
 \end{equation}
\end{cor}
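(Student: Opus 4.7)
The plan is to deduce this directly from Corollary~\ref{cor:vanishingcrit}. That corollary provides a finite list of roots of unity $\zeta_1,\ldots,\zeta_r \in \C\setminus\{1\}$ and vanishing orders $a_1,\ldots,a_r$ such that any $F\in\Q[z^\pm]$ with $F'(1)=0$ that vanishes at $\zeta_i$ to order at least $a_i$ lies in $A(v)$. So my task reduces to checking these two conditions for $F_{a,b}$, uniformly in $a$ and $b$, once $b_0$ and $a_0$ are chosen appropriately.

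First, observe that $F_{a,b}$ is palindromic: $F_{a,b}(z)=F_{a,b}(z^{-1})$, since the factors $(1+z+\cdots+z^{b-1})^a$ and $(1+z^{-1}+\cdots+z^{-(b-1)})^a$ are swapped under $z\mapsto z^{-1}$. Differentiating this identity and evaluating at $z=1$ gives $F_{a,b}'(1)=-F_{a,b}'(1)$, hence $F_{a,b}'(1)=0$ automatically. This disposes of the derivative condition.

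Next, using the geometric series identity $1+z+\cdots+z^{b-1}=(z^b-1)/(z-1)$ together with $1+z^{-1}+\cdots+z^{-(b-1)}=z^{-(b-1)}(1+z+\cdots+z^{b-1})$, we rewrite
\[
F_{a,b}(z) \;=\; z^{-a(b-1)}\left(\frac{z^b-1}{z-1}\right)^{\!2a} \;=\; z^{-a(b-1)}\prod_{\substack{d\mid b\\ d>1}}\Phi_d(z)^{2a}.
\]
Now set $b_0:=\operatorname{lcm}(\mathrm{ord}(\zeta_1),\ldots,\mathrm{ord}(\zeta_r))$ and $a_0:=\lceil\max_i a_i/2\rceil$. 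If $b$ is divisible by $b_0$, then each $\zeta_i$ is a $b$-th root of unity distinct from $1$, hence a simple root of $(z^b-1)/(z-1)$; since $z^{-a(b-1)}$ is a unit at $\zeta_i$, the Laurent polynomial $F_{a,b}$ vanishes at $\zeta_i$ to order exactly $2a$. For $a\geq a_0$ this order is at least $a_i$, and Corollary~\ref{cor:vanishingcrit} then yields $F_{a,b}\in A(v)$.

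There is essentially no obstacle here: all the work has been pushed into Theorem~\ref{thm:main} and its consequence Corollary~\ref{cor:vanishingcrit}. The only small pleasantness is recognizing that the factorization of $F_{a,b}$ into cyclotomic pieces, together with divisibility of $b$ by $b_0$, forces the required multiplicities at the finitely many test roots of unity, while the palindromic symmetry takes care of the derivative condition for free.
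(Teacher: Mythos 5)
Your proof is correct and takes the same route as the paper: reduce to Corollary~\ref{cor:vanishingcrit}, choose $b_0$ as a common order (or multiple thereof) of the $\zeta_i$ and $a_0$ roughly half the maximal required multiplicity, then check the two hypotheses. The paper states the derivative condition as a ``straight-forward calculation'' without detail; your palindromic-symmetry argument ($F_{a,b}(z)=F_{a,b}(z^{-1})$ forces $F'_{a,b}(1)=-F'_{a,b}(1)$) is a clean way to make that explicit, and your rewriting $F_{a,b}=z^{-a(b-1)}\bigl((z^b-1)/(z-1)\bigr)^{2a}$ makes the multiplicity count at each $\zeta_i$ transparent. These are welcome elaborations of the paper's sketch rather than a genuinely different argument.
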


\begin{proof}
 Let  $\zeta_1,\ldots,\zeta_r$ and $a_1,\ldots,a_r$ as in Corollary \ref{cor:vanishingcrit}. 
 Let $b_0$ such that $\zeta_i^{b_0}=1$ for all $i=1,\ldots,r$ and $a\geq\frac{1}{2}\max_{i=1}^r (a_i)$. Then for all $a\geq a_0$ and all $b$ divisible by $b_0$ the Laurent polynomial $F_{a,b}$ vanishes at $\zeta_i$ with multiplicity at least $a_i$ for $i=1,\ldots,r$. A straight-forward calculation further shows that $F'_{a,b}(1)=0$.
\end{proof}

\begin{cor}
 Consider finitely many triplets
 \begin{equation*}
  t_1,\ldots,t_r\in\{(\alpha,\beta,\gamma)\in\Z^3\mid \alpha+\beta+\gamma)=0, \alpha\beta\gamma\neq0\,\textrm{ and }\gcd(\alpha,\beta,\gamma) = 1\}.
 \end{equation*}
 Then there are natural numbers $a,b$ such that $F_{a,b}\in\bigcap_{i=1}^r A(t_i)$.
\end{cor}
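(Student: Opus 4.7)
The plan is essentially a direct application of Corollary~\ref{cor:concrete} to each of the finitely many triplets and then a uniformization argument. The point is that Corollary~\ref{cor:concrete} gives, for each fixed triplet $t$, a \emph{cofinal} set of pairs $(a,b)$ for which $F_{a,b}\in A(t)$, namely those with $a\geq a_0(t)$ and $b_0(t)\mid b$. Intersecting finitely many such cofinal sets again yields a nonempty cofinal set.

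More precisely, I would proceed as follows. For each $i\in\{1,\ldots,r\}$, apply Corollary~\ref{cor:concrete} to the triplet $t_i$ to obtain natural numbers $a_0^{(i)},b_0^{(i)}>0$ such that $F_{a,b}\in A(t_i)$ whenever $a\geq a_0^{(i)}$ and $b_0^{(i)}\mid b$. Now set
\[
 a \;:=\; \max_{1\le i\le r} a_0^{(i)} \qquad\text{and}\qquad b\;:=\;\lcm\bigl(b_0^{(1)},\ldots,b_0^{(r)}\bigr).
\]
For this pair $(a,b)$ the conditions $a\geq a_0^{(i)}$ and $b_0^{(i)}\mid b$ are satisfied simultaneously for every $i$, so $F_{a,b}\in A(t_i)$ for each $i$, which is exactly the claim $F_{a,b}\in\bigcap_{i=1}^r A(t_i)$.

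There is no genuine obstacle here: the whole content has already been absorbed into Theorem~\ref{thm:main} and Corollary~\ref{cor:concrete}. The only thing to check is that the conditions imposed by Corollary~\ref{cor:concrete} on $(a,b)$ are compatible across different triplets, which is immediate because the lower bound on $a$ is preserved under taking maxima and the divisibility constraint on $b$ is preserved under taking least common multiples. In particular, neither constant $a_0^{(i)}$ nor $b_0^{(i)}$ depends on the other triplets, so the finite intersection of the admissible sets is again a set of the same shape and therefore nonempty.
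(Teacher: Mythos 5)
Your proof is correct and follows the same approach as the paper: apply Corollary~\ref{cor:concrete} to each triplet separately, then take the maximum of the resulting $a_0^{(i)}$ and a common multiple of the $b_0^{(i)}$ (you use the least common multiple, the paper uses the product; both satisfy the divisibility constraints).
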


\begin{proof}
 For each $i\in\{1,\ldots,r\}$ we obtain $a_0$ and $b_0$ as in Corollary \ref{cor:concrete}. We can choose $a$ as the maximum of all such $a_0$ and $b$ as the product of all such $b_0$.
\end{proof}

In fact, we conjecture that the Laurent polynomials $F_{a,b}$ in Equation \eqref{eq:F_nm} can even be realized as positive rational linear combinations of Schur polynomials.

\begin{conj}\label{con:speccon}
 There are natural numbers $a_0,b_0>0$ such that for all $a\geq a_0$, all $b$ divisible by $b_0$ there is $N\in\N$ and a Schur positive symmetric polynomial $P$ in three variables such that
 \begin{equation*}
  N\cdot F_{a,b}= P_v.
 \end{equation*}
\end{conj}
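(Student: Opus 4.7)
The strategy is to upgrade Corollary~\ref{cor:concrete} from mere existence of a rational symmetric preimage of $F_{a,b}$ to existence of a \emph{Schur positive} one (after scaling by $N$). Corollary~\ref{cor:concrete} already gives a symmetric $P\in\Q[x_1,x_2,x_3]$ with $P_v = F_{a,b}$; clearing denominators yields $N\in\N$ with $N\cdot F_{a,b} = (NP)_v$. The genuine content of the conjecture is therefore that, for $a \geq a_0$ and $b$ divisible by $b_0$, some preimage can be made Schur positive.

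The first observation is that this preimage is far from unique. The kernel of the restriction map $\Q[x_1,x_2,x_3]^{S_3}\to A(v)$, $P\mapsto P_v$, contains $e_3-1$ because $v_1+v_2+v_3=0$ forces $(e_3)_v=1$. Consequently, if $P_0$ is any preimage of $F_{a,b}$, then $P_0 + (e_3-1)\cdot R$ is also a preimage for every symmetric $R$, giving an infinite-dimensional affine space of choices. Schur positivity cuts out a polyhedral cone in $\Q[x_1,x_2,x_3]^{S_3}$, and the conjecture reduces to showing that this affine space meets that cone.

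The concrete path I would take is representation-theoretic. A natural candidate is the character of $\bigl(\Sym^{b-1}(\rho)\otimes \Sym^{b-1}(\bar\rho)\bigr)^{\otimes a}$, which is Schur positive and whose restriction via $\psi_v$ lies in $A(v)$ and is combinatorially close to $F_{a,b}$. The discrepancy would then be corrected by adding characters of further $\SU(3)$-representations drawn from the irreducibles $\pi_{m,n}$, leveraging the thresholds $a_0, b_0$ to ensure enough flexibility for the corrections themselves to be realized as Schur positive polynomials. An alternative, purely combinatorial approach would be to directly construct a Schur positive $R$ so that $P_0 + (e_3-1)\cdot R$ is Schur positive, perhaps by expanding some integer multiple of $F_{a,b}$ as a nonnegative sum of restrictions $(s_\lambda)_v$ of Schur polynomials of controlled degree, and controlling how the identity $(e_3)_v = 1$ lets us absorb negative contributions.

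The main obstacle is positivity of the correction: unlike in Corollary~\ref{cor:concrete}, where only the vanishing criterion of Theorem~\ref{thm:main} was needed, here one must control the signs of the Schur coefficients that arise after restriction via $\psi_v$. An asymptotic argument seems promising---as $a$ and $b$ grow, the preimage affine space becomes much larger while the set of nontrivial positivity constraints should remain manageable---but making this rigorous would require precise asymptotics for how the Schur basis behaves under the map $P\mapsto P_v$, information that Theorem~\ref{thm:main} does not supply and that appears to be the heart of the difficulty. The computational evidence summarized in Tables~\ref{table: min k} and~\ref{table: dim}, showing that Schur positive solutions do exist for every tested pair, suggests that the asymptotic picture is favorable; extracting an effective bound on $a_0, b_0$ from such a proof would likely be delicate.
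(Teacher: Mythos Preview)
The statement you are attempting to prove is labeled as a \emph{Conjecture} in the paper, and the paper does not prove it. The authors only supply numerical evidence in Remark~\ref{rmk:testconj} and Table~\ref{table: conjecture} (for the single case $v=(1,1,-2)$), and in the concluding section they mention that attempts via variants of P\'olya's Theorem were unsuccessful. There is therefore no proof in the paper to compare your proposal against.

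Your proposal is not a proof either, as you yourself acknowledge. The ``main obstacle'' you identify---controlling the signs of the Schur coefficients of the correction term---is precisely the content of the conjecture, and you state that resolving it ``would require precise asymptotics\dots that appears to be the heart of the difficulty.'' What you have written is a sensible survey of possible lines of attack: modifying an arbitrary symmetric preimage by elements of the kernel generated by $e_3-1$, or starting from the manifestly Schur positive character of $(\Sym^{b-1}\rho\otimes\Sym^{b-1}\bar\rho)^{\otimes a}$ and correcting. But neither line is carried through, and the asymptotic heuristic (``the preimage affine space becomes much larger while the set of nontrivial positivity constraints should remain manageable'') is not made precise. In short, both the paper and your proposal leave the statement open.
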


In order to amalgamate two representations given by tuples $v$ and $w$ let $a_0,b_0,N$ and $a_0',b_0',N'$ the natural numbers from the previous conjecture for $v$ and $w$ respectively. Then, if Conjecture \ref{con:speccon} is true, letting $a=\max(a_0,a_0')$, $n=\lcm(b_0,b_0')$ and $M=\lcm(N,N')$, we have
\begin{equation*}
    P_v=M\cdot F_{a,b}=Q_{w}
\end{equation*}
for Schur positive symmetric polynomials $P$ and $Q$.

\begin{rmk}\label{rmk:testconj}
 In the case $v = (1,1,-2)$ our computational experiments suggest that Conjecture~\ref{con:speccon} is true for $a_0=1$ and $b_0=3$.
  We found $N$ and Schur-positive symmetric polynomials $P_v$ that satisfy $N\cdot F_{a,b} = P_v$ for various pairs of $a$ and $b$. We record the values of $N$ and the dimensions of $P_v$ in Table~\ref{table: conjecture}.

\begin{table}[th]
  \caption{Experimental data on Conjecture \ref{con:speccon} with $v = (1,1,-2)$}
  \label{table: conjecture}
   \begin{tabular*}{.3\linewidth}{@{\extracolsep{\fill}}ccrr@{}}
   \toprule
    $b$ & $a$ & $N$ & $\dim P_v$ \\
    \midrule
    3 & 1 & 1 & 9 \\
    6 & 1 & 2 & 72 \\
    9 & 1 & 3 & 243 \\
    12 & 1 & 4 & 576 \\
    15 & 1 & 5 & 1125 \\
    \bottomrule
    \end{tabular*}
    \quad
    \begin{tabular*}{.3\linewidth}{@{\extracolsep{\fill}}ccrr@{}}
   \toprule
    $b$ & $a$ & $N$ & $\dim P_v$ \\
    \midrule
    3 & 2 & 1 & 81 \\
    6 & 2 & 1 & 1296 \\
    9 & 2 & 1 & 6561 \\
    12 & 2 & 1 & 20736 \\
    15 & 2 & 1 & 50625 \\
    \bottomrule
    \end{tabular*}
    \quad
    \begin{tabular*}{.3\linewidth}{@{\extracolsep{\fill}}ccrr@{}}
   \toprule
    $b$ & $a$ & $N$ & $\dim P_v$ \\
    \midrule
    3 & 3 & 1 & 729 \\
    6 & 3 & 2 & 93312 \\
    9 & 3 & 1 & 531441 \\
    12 & 3 & 2 & 5971968 \\
    \\
    \bottomrule
    \end{tabular*}
 \end{table}

The computations are similar to what we perform in the previous section. Given $a,b$ and $N$, we obtain a Laurent polynomial $N\cdot F_{a,b}$. The degree of this polynomial gives an upper bound on the degrees of the Schur polynomials that can appear in $P_v$. We then take all available Schur polynomials and solve an integral linear program like before. The source code and explicit polynomials $P_v$ can be found on our \MathRepo page~(\ref{eq:link}).
\end{rmk}

\subsection*{Proof of Theorem \ref{thm:main}}\label{sec:proofalgeom}
Our proof involves some algebraic geometry; see the textbooks by Hartshorne \cite{Hart77} and Harris \cite{Ha95}.
We consider the polynomial map
\begin{equation}\label{eq:curve}
  f\colon \C^*\to \C^2,\, z\mapsto \bigl(F_1(z),F_2(z)\bigr)=(z^{v_1}+z^{v_2}+z^{v_3},z^{v_1+v_2}+z^{v_1+v_3}+z^{v_2+v_3}).
\end{equation}

We first study where this map fails to be injective.

\begin{lem}\label{lem:fibone}
 We have $f^{-1}(f(1))=\{1\}$.
\end{lem}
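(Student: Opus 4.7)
The plan is to exploit the identity $v_1+v_2+v_3=0$ to view $z^{v_1}, z^{v_2}, z^{v_3}$ as the three roots of a monic cubic whose coefficients are exactly the components of $f(z)$, and then use the $\gcd$ condition via B\'ezout to recover $z$ from its powers.

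First I would observe that for any $z\in\C^*$, the three numbers $\alpha_i:=z^{v_i}$ satisfy $\alpha_1\alpha_2\alpha_3 = z^{v_1+v_2+v_3}=1$. Their elementary symmetric functions are precisely
\begin{equation*}
 e_1(\alpha_1,\alpha_2,\alpha_3)=F_1(z),\quad e_2(\alpha_1,\alpha_2,\alpha_3)=F_2(z),\quad e_3(\alpha_1,\alpha_2,\alpha_3)=1,
\end{equation*}
so $\alpha_1,\alpha_2,\alpha_3$ are the roots of the monic cubic $t^3-F_1(z)t^2+F_2(z)t-1\in\C[t]$.

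Next, note $f(1)=(3,3)$. If $f(z)=(3,3)$, then the cubic above becomes $t^3-3t^2+3t-1=(t-1)^3$, which forces $z^{v_1}=z^{v_2}=z^{v_3}=1$. Now I would invoke the assumption $\gcd(v_1,v_2,v_3)=1$: B\'ezout gives integers $a,b,c$ with $av_1+bv_2+cv_3=1$, so
\begin{equation*}
 z = z^{av_1+bv_2+cv_3} = (z^{v_1})^a(z^{v_2})^b(z^{v_3})^c = 1,
\end{equation*}
which completes the proof.

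There is no real obstacle here; the only thing to double-check is that we really are allowed to conclude the three $\alpha_i$ equal $1$ (rather than some permutation argument being needed), but a triple root of $(t-1)^3$ leaves no ambiguity. Note this argument makes no use of the hypothesis $v_1v_2v_3\neq 0$, which is consistent with the fact that the lemma should hold for the excluded case too; that hypothesis will presumably be used only in later parts of the proof of Theorem \ref{thm:main}.
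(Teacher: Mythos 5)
Your proposal is correct and follows essentially the same route as the paper: identify $z^{v_1},z^{v_2},z^{v_3}$ as the roots of the monic cubic $t^3-F_1(z)t^2+F_2(z)t-1$, deduce from $f(z)=f(1)$ that this cubic is $(t-1)^3$, hence all three powers equal $1$, and finish with B\'ezout using $\gcd(v_1,v_2,v_3)=1$. The only difference is that you spell out the B\'ezout step which the paper leaves implicit.
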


\begin{proof}
  Let $x\in\C^*$ such that $f(x)=f(1)$.
  This implies
  \begin{equation*}
    (t-x^{v_1})(t-x^{v_2})(t-x^{v_3})=t^3-F_1(x)t^2+F_2(x)t-1=t^3-F_1(1)t^2+F_2(1)t-1=(t-1)^3,
  \end{equation*}
  which entails $x^{v_1}=x^{v_2}=x^{v_3}=1$. Since $\gcd(v_1,v_2,v_3)=1$, we get $x=1$. 
\end{proof}

For a complex number $x\in\C$, let $|x|=\sqrt{x\cdot\overline{x}}$ be its norm.

\begin{lem}\label{lem:realfiber}
 For $|x|\neq 1$ we have $|f^{-1}(f(x))|=1$.
\end{lem}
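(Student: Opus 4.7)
The plan is to exploit that the fiber condition $f(y)=f(x)$ can be rephrased as a coincidence of multisets in $\C^\ast$, and then use the size constraint $|x|\neq 1$ together with $v_1 v_2 v_3 \neq 0$ and $\gcd(v_1,v_2,v_3)=1$ to pin down $y$ uniquely.

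First I would observe that, because $v_1+v_2+v_3=0$, for every $z\in\C^\ast$ the three numbers $z^{v_1},z^{v_2},z^{v_3}$ are exactly the roots (with multiplicity) of
\[
 t^3 - F_1(z)\, t^2 + F_2(z)\, t - 1\ \in\ \C[t].
\]
Hence $f(y)=f(x)$ is equivalent to the multiset equality $\{y^{v_1},y^{v_2},y^{v_3}\}=\{x^{v_1},x^{v_2},x^{v_3}\}$ in $\C^\ast$.

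Next, applying $|\cdot|$ gives the multiset equality
\[
 \{s^{v_1},s^{v_2},s^{v_3}\}=\{r^{v_1},r^{v_2},r^{v_3}\}
\]
where $r=|x|$ and $s=|y|$. We are given $r\neq 1$, and since every $v_i$ is nonzero no element of the right-hand multiset equals $1$, which forces $s\neq 1$. Order the exponents $v_1\leq v_2\leq v_3$; note $v_1<0<v_3$. If both $r,s>1$, the minima and maxima of the two multisets must agree, so $r^{v_1}=s^{v_1}$ (and $r^{v_3}=s^{v_3}$), yielding $r=s$ because $v_1\neq 0$. If instead $r>1>s$ (or vice versa), the monotonicity directions are opposite, and one reads off $s^{v_3}=r^{v_1}$ and $s^{v_1}=r^{v_3}$; multiplying these gives $s^{v_1+v_3}=r^{v_1+v_3}$, i.e.\ $s^{-v_2}=r^{-v_2}$, whence $r=s$ since $v_2\neq 0$, contradicting $r>1>s$. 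Thus $|y|=|x|$.

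With $r=s$, any bijection matching the two multisets must send $v_i$ to some $v_{\sigma(i)}$ with $r^{v_i}=r^{v_{\sigma(i)}}$, and because $r\neq 1$ this forces $v_i=v_{\sigma(i)}$; in particular $y^{v_i}=x^{v_i}$ for each $i$. Therefore $(y/x)^{v_i}=1$ for $i=1,2,3$, so $y/x$ is a $d$-th root of unity where $d=\gcd(v_1,v_2,v_3)=1$. Hence $y=x$, which gives $|f^{-1}(f(x))|=1$.

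The only mild obstacle is that the multiset $\{r^{v_1},r^{v_2},r^{v_3}\}$ can have repeated elements when two of the $v_i$ agree (e.g.\ $v=(1,1,-2)$), so one cannot simply match entries index-by-index; the min/max argument above is designed precisely to sidestep this subtlety, and the final step only uses equality up to a permutation that fixes the $v_i$-values, which is automatic.
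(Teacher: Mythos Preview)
Your argument is correct and follows essentially the same route as the paper: reduce $f(y)=f(x)$ to the multiset equality $\{y^{v_i}\}=\{x^{v_i}\}$, use $|x|\neq 1$ to force $y^{v_i}=x^{v_i}$ for each $i$, and conclude from $\gcd(v_1,v_2,v_3)=1$ (the paper packages this last step as an appeal to Lemma~\ref{lem:fibone}, and in the middle it argues that $|x|,|y|$ lie on the same side of $1$ by counting how many $|x|^{v_i}$ exceed $1$, rather than your explicit contradiction for the mixed case). One trivial omission: your case split treats $r,s>1$ and $r>1>s$ but not $r,s<1$; the identical min/max reasoning disposes of that case as well.
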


\begin{proof}
 Let $y\in\C^*$ such that $f(y)=f(x)$. This implies that the zeros of the polynomial $$(t-y^{v_1})(t-y^{v_2})(t-y^{v_3})$$ are the three complex numbers $x^{v_1}$, $x^{v_2}$ and $x^{v_3}$. If $|x|>1$, then $|y|>1$ as well. Indeed, if two of the three integers $v_1,v_2,v_3$ are positive, then two of the three real numbers $|x|^{v_1}$, $|x|^{v_2}$ and $|x|^{v_3}$ are larger than one and thus the same must hold for the real numbers $|y|^{v_1}$, $|y|^{v_2}$ and $|y|^{v_3}$. Otherwise two of the three integers $v_1,v_2,v_3$ are negative, and a similar argument applies. Since for every real $t>1$ the map $d\mapsto t^d$ is strictly increasing, we must have $y^{v_1}=x^{v_1}$, $y^{v_2}=x^{v_2}$ and $y^{v_3}=x^{v_3}$. 
 This implies $f(\frac{y}{x})=f(1)$, and hence Lemma \ref{lem:fibone} yields  $y=x$.
 The case $|x|<1$ is analogous.
 \end{proof}

\begin{rmk}\label{rem:exludecase}
  The statement of Lemma~\ref{lem:realfiber} is not true in the case $(v_1,v_2,v_3)=(-1,0,1)$. Indeed, in this case the preimage of $f(x)$ under the map
  \begin{equation}
  f\colon\C^*\to \C^2,\, z\mapsto =(z^{v_1}+z^{v_2}+z^{v_3},z^{v_1+v_2}+z^{v_1+v_3}+z^{v_2+v_3})=(z^{-1}+1+z,z^{-1}+1+z)
\end{equation}
has two elements for all $x\in\mathbb{C}^*\setminus\{-1,1\}$. This is why we have excluded this case.
\end{rmk}

We denote by $B(v)$ the $\C$-subalgebra of $\C[z^\pm]$ generated by $F_1$ and $F_2$. Note that $B(v)=A(v)\otimes_{\mathbb{Q}}\mathbb{C}$ and $A(v)=B(v)\cap\mathbb{Q}[z^\pm]$. 

\begin{lem}\label{lem:finite}
The ring extension $B(v)\subset\C[z^\pm]$ is \emph{finite}, i.e., $\C[z^\pm]$ is finitely generated as a $B(v)$-module.
\end{lem}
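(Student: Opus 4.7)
The plan is to show that $\C[z^\pm]$ is generated as a $B(v)$-algebra by finitely many elements each of which is integral over $B(v)$; by the standard fact in commutative algebra that an algebra finitely generated by integral elements is module-finite, this yields the conclusion.

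The key observation is that the three Laurent monomials $z^{v_1}, z^{v_2}, z^{v_3}$ are the roots of the monic polynomial
$$p(t) \;=\; t^{3} - F_1\, t^{2} + F_2\, t - 1 \;\in\; B(v)[t],$$
because their elementary symmetric functions are precisely $F_1$, $F_2$, and $z^{v_1+v_2+v_3} = 1$, using $v_1+v_2+v_3=0$. Hence each $z^{v_i}$ is integral over $B(v)$. Since the constant term of $p(t)$ is the unit $-1$, each $z^{-v_i}$ is also integral over $B(v)$, for instance as a root of the reciprocal polynomial $-t^{3} p(1/t) = t^{3} - F_2\, t^{2} + F_1\, t - 1$.

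To pass from the $z^{\pm v_i}$ to $z$ itself, I invoke the hypothesis $\gcd(v_1,v_2,v_3)=1$. Bezout gives integers $a_1,a_2,a_3$ with $a_1 v_1 + a_2 v_2 + a_3 v_3 = 1$, so
$$z \;=\; \prod_{i=1}^{3} \bigl(z^{v_i}\bigr)^{a_i},$$
which is a product of nonnegative integer powers of the integral elements $z^{v_i}$ and $z^{-v_i}$ (depending on the sign of $a_i$). Since products of integral elements are integral, $z$ is integral over $B(v)$; an identical argument, or simply passing to $-1 = -a_1 v_1 - a_2 v_2 - a_3 v_3$, shows that $z^{-1}$ is integral as well. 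Therefore $\C[z^\pm] = B(v)[z, z^{-1}]$ is finitely generated as a $B(v)$-algebra by integral elements, and the lemma follows.

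There is essentially no obstacle; the argument is short and purely algebraic, once one spots the monic cubic $p(t)$ furnished by Vieta. The earlier geometric lemmas (Lemmas \ref{lem:fibone} and \ref{lem:realfiber}) play no role in \emph{this} lemma, but their content—the generic injectivity of $f$—will presumably enter the proof of Theorem \ref{thm:main} to identify $B(v)$ with the coordinate ring of the image curve and to pin down the finite-dimensional quotient $\Q[z^\pm]/(\Phi)$ over which the subalgebra $C$ is defined.
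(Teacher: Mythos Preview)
Your proof is correct and follows essentially the same approach as the paper: both use the monic cubic $t^{3}-F_1t^{2}+F_2t-1$ from Vieta to make each $z^{v_i}$ integral over $B(v)$, and then invoke $\gcd(v_1,v_2,v_3)=1$ to reach $z^{\pm1}$. The only cosmetic difference is that the paper exploits $v_1+v_2+v_3=0$ to shift the B\'ezout coefficients to be nonnegative and then writes down an explicit finite module-generating set, whereas you instead observe that $z^{-v_i}$ is integral via the reciprocal cubic and appeal to the standard fact that a finitely generated algebra over integral elements is module-finite.
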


\begin{proof}
 For $t\in\{z^{v_1},z^{v_2},z^{v_3}\}$ we have
 \begin{equation*}
     t^3-F_1t^2+F_2t-1=(t-z^{v_1})(t-z^{v_2})(t-z^{v_3})=0.
 \end{equation*}
 This implies that $t^k$, for $k\in\N$, is contained in the $B(v)$-module that is generated by $1,t,t^2$. Thus $\C[z^{v_1},z^{v_2},z^{v_3}]$ is equal to the $B(v)$-module that is generated by
 \begin{equation*}
     \{z^{av_1}z^{bv_2}z^{cv_3}\mid 0\leq a,b,c\leq2\}.
 \end{equation*}
 Now it remains to show that $\C[z^\pm]=\C[z^{v_1},z^{v_2},z^{v_3}]$.
 The inclusion ``$\supset$'' is clear. Since $\gcd(v_1,v_2,v_3)=1$, there are integers $a,b,c$ such that
 \begin{equation*}
  av_1+bv_2+cv_3=1.
 \end{equation*}
 Since $v_1+v_2+v_3=0$ we also have
 \begin{equation*}
  (a+m)v_1+(b+m)v_2+(c+m)v_3=1
 \end{equation*}
for every $m\in\Z$. In particular, we can find natural numbers $a',b',c'$ such that
\begin{equation*}
  a'v_1+b'v_2+c'v_3=1
 \end{equation*}
 meaning that $z=(z^{v_1})^{a'}(z^{v_2})^{b'}(z^{v_3})^{c'}\in \C[z^{v_1},z^{v_2},z^{v_3}]$.
 Analogously, it can be proved that $z^{-1}\in \C[z^{v_1},z^{v_2},z^{v_3}]$.
\end{proof}

The $\C$-algebra $B(v)$ is the coordinate ring of the algebraic curve $X\subset\C^2$ cut out by the elements of the kernel of the map
\begin{equation*}
  \C[x,y]\to B(v),\, P\mapsto P(F_1,F_2);
\end{equation*}
we denote the quotient field of $B(v)$ by $K$.
Lemma \ref{lem:finite} implies that $X$ is, in fact, the image of $f$ because finite morphisms are closed \cite[Exc.~II.4.1]{Hart77}.

\begin{prop}\label{prop:mostlyinj}
  There are only finitely many $x\in\C^*$ such that $|f^{-1}(f(x))|>1$.
  All of them are roots of unity.
  Moreover, we have $K=\C(z)$, the rational function field.
\end{prop}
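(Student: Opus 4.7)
The plan is to prove the three claims of the proposition in the following order: first $K=\C(z)$, then the finiteness of the non-injectivity locus of $f$, and finally that every point in that locus is a root of unity.

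To show $K=\C(z)$, I will use Lemma \ref{lem:finite}: since $\C[z^\pm]$ is finite over $B(v)$, the field extension $\C(z)/K$ is finite, of some degree $d$. Because $\C^*$ and $X$ are integral affine curves and $f$ is finite and dominant, the generic fiber of $f$ has size exactly $d$, so the locus of points $f(x)\in X$ with fiber of size strictly less than $d$ is a proper, hence finite, Zariski-closed subset of $X$. But Lemma \ref{lem:realfiber} exhibits infinitely many $x\in\C^*$ (all those with $|x|\neq 1$) whose fiber already has size one, forcing $d=1$.

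Once $K=\C(z)$ is established, the inclusion $B(v)\subset\C[z^\pm]$ realizes $\C[z^\pm]$ as the integral closure of $B(v)$ inside its own fraction field, since the PID $\C[z^\pm]$ is already normal. Consequently $f\colon\C^*\to X$ is the normalization morphism of the affine curve $X$. The normalization is an isomorphism over the smooth locus of $X$, and a reduced curve has only finitely many singular points; combined with the fact that $f$ is finite (so each fiber is finite), this forces only finitely many $x\in\C^*$ to have $|f^{-1}(f(x))|>1$.

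For the roots-of-unity conclusion, Lemma \ref{lem:realfiber} already tells us that every such $x$ lies on the unit circle, so it remains to work algebraically. If $y\neq x$ satisfy $f(x)=f(y)$ then, by the argument of Lemma \ref{lem:fibone}, the multisets $\{x^{v_1},x^{v_2},x^{v_3}\}$ and $\{y^{v_1},y^{v_2},y^{v_3}\}$ agree, giving a non-identity $\sigma\in S_3$ with $y^{v_i}=x^{v_{\sigma(i)}}$ for all $i$. A case analysis on $\sigma$ then produces a non-trivial relation $x^N=1$: for a transposition $(ij)$ fixing the index $k$, the identities $(y/x)^{v_k}=1$ and $(xy)^{v_i-v_j}=1$ combine to give $x^{2v_k(v_i-v_j)}=1$; for a 3-cycle, say $1\mapsto 2\mapsto 3\mapsto 1$, the relations $y^{v_1 v_2}=x^{v_2^2}$ and $y^{v_1 v_2}=x^{v_1 v_3}$ yield $x^{v_2^2-v_1 v_3}=1$, and the exponent is nonzero because $t^2+v_2 t+v_2^2$ has negative discriminant whenever $v_2\neq 0$. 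Any subcase in which some $v_i=v_j$ actually forces $y=x$ via $\gcd(v_1,v_2,v_3)=1$, and therefore does not arise under $y\neq x$.

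The main obstacle I foresee is making this last step watertight: the exponents in the case analysis must be provably nonzero, which relies essentially on $v_1v_2v_3\neq 0$ (excluding $(-1,0,1)$) and on a careful handling of the degenerate subcases where two of the $v_i$ coincide. A more conceptual alternative via Kronecker's theorem, using that the non-injectivity locus is defined over $\Q$ and lies in $S^1$, would need the extra argument that the bad points are algebraic integers, which is less transparent than the direct calculation above.
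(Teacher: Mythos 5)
Your proof is correct, and it coincides with the paper's for the first two claims: finiteness of $\C[z^\pm]$ over $B(v)$ (Lemma~\ref{lem:finite}) gives a finite extension $\C(z)/K$ of some degree $d$, a generic fiber of the finite map $f$ has exactly $d$ points, and Lemma~\ref{lem:realfiber} forces $d=1$; finiteness of the non-injectivity locus then follows, whether you phrase it (as the paper does) as the generic-fiber statement from \cite[Prop.\ 7.16]{Ha95} or (as you do) via $f$ being the normalization of $X$, which is an isomorphism off the finitely many singular points. Where you genuinely diverge is the third claim. The paper argues in one sentence that the bad $x$ are algebraic because $f$ is defined over $\Q$ and that, being on $S^1$, they are therefore roots of unity; as you rightly flag, this Kronecker-style inference is incomplete as written, since an algebraic number on $S^1$, even one with all Galois conjugates on $S^1$, need not be a root of unity unless it is an algebraic \emph{integer} (e.g.\ $(3+4i)/5$), and that integrality is not established. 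Your direct argument avoids this entirely: from $f(x)=f(y)$ with $y\neq x$ you get a nontrivial $\sigma\in S_3$ with $y^{v_i}=x^{v_{\sigma(i)}}$, and the case analysis on $\sigma$ yields an explicit nontrivial relation $x^N=1$ --- with $N=2v_k(v_i-v_j)$ for a transposition (the degenerate subcase $v_i=v_j$ forcing $y=x$ via $\gcd(v_1,v_2,v_3)=1$), and $N=v_2^2-v_1v_3=v_1^2+v_1v_2+v_2^2\neq0$ for a $3$-cycle, which is precisely where the hypothesis $v_1v_2v_3\neq0$ (cf.\ Remark~\ref{rem:exludecase}) is used. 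This is more elementary, self-contained, and in fact closes a small gap in the paper's proof.
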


\begin{proof}
  For every $x\in\C^*$ the fiber $f^{-1}(f(x))$ is Zariski closed in $\C^*$.
  The Zariski closed subsets of $\C^*$ are either finite or all of $\C^*$.
  Therefore, since $f$ is not constant, every fiber $f^{-1}(f(x))$ is finite.
  By \cite[Proposition 7.16]{Ha95} the field extension $\C(z)/K$ 
  is finite and there is a nonempty Zariski open subset $U\subset \C^*$ such that $|f^{-1}(f(x))|=[\C(z):K]$ for all $x\in U$.
  This implies that $|f^{-1}(f(x))|=[\C(z):K]$ is true for all but finitely many $x\in\C^*$.
  Lemma \ref{lem:realfiber} thus shows that $[\C(z):K]=1$ and that each of the finitely many $x\in\C^*$ with $|f^{-1}(f(x))|>1$ must satisfy $|x|=1$.
  Moreover, since $f$ is defined over $\Q$, all such $x$ are algebraic numbers and therefore roots of unity.
\end{proof}

The situation is very similar for ramification points of $f$.

\begin{prop}\label{prop:rami}
 If $x\in\C^*$ is not a root of unity, then $f$ is unramified at $x$.
\end{prop}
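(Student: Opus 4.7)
The plan is to convert ``$f$ unramified at $x$'' into the concrete criterion that at least one of $F_1'(x), F_2'(x)$ is nonzero, and then to prove that the simultaneous vanishing of these two derivatives forces $x$ to be a root of unity.

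\textbf{Criterion for unramification.} Since $f$ factors as a birational morphism $\C^* \to X$ onto its image curve (by Proposition~\ref{prop:mostlyinj}), unramification at $x$ is equivalent to the pullback of the maximal ideal $\mathfrak{m}_{f(x)} \subset B(v)$ generating the maximal ideal $\mathfrak{m}_x \subset \mathcal{O}_{\C^*, x}$. Because $\mathfrak{m}_{f(x)}$ is generated by $F_1 - F_1(x)$ and $F_2 - F_2(x)$, and because $F_i(z) - F_i(x) \equiv F_i'(x)(z - x) \pmod{(z - x)^2}$, this generation holds if and only if $(F_1'(x), F_2'(x)) \neq (0, 0)$.

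\textbf{Algebraic reduction.} Assume for contradiction that $F_1'(x) = F_2'(x) = 0$. Multiplying by $x$ and $-x$ respectively, and setting $a = x^{v_1}$, $b = x^{v_2}$, $c = x^{v_3}$ (so $abc = 1$), the two relations become $v_1 a + v_2 b + v_3 c = 0$ and $v_1 bc + v_2 ca + v_3 ab = 0$. Substituting $v_3 = -v_1 - v_2$ rewrites them as
\[
v_1(a - c) + v_2(b - c) = 0 \qquad\text{and}\qquad v_1 b(a - c) + v_2 a(b - c) = 0.
\]
Multiplying the first by $a$ and subtracting the second eliminates $v_2$ and produces $v_1(a - b)(a - c) = 0$. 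If $a = c$, the first relation forces $b = c$; if instead $a = b$, the first relation gives $(v_1 + v_2)(a - c) = 0$, and since $v_3 = -(v_1 + v_2) \neq 0$ we again conclude $a = c$. In either case $a = b = c$.

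\textbf{Finishing.} From $a^3 = abc = 1$ the common value is a cube root of unity, so $x^{3v_i} = 1$ for each $i$. Applying B\'ezout to $\gcd(v_1, v_2, v_3) = 1$ then yields $x^3 = 1$, contradicting the hypothesis that $x$ is not a root of unity. The main technical point is spotting the elimination that produces the symmetric factor $(a - b)(a - c)$; everything else is routine and relies crucially on the hypothesis $v_1 v_2 v_3 \neq 0$ from Question~\ref{quest:initial2} (the case $(-1,0,1)$ is genuinely excluded here, as in Remark~\ref{rem:exludecase}).
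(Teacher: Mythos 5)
Your proof is correct, and it takes a genuinely different route from the paper's. Both arguments start from the same observation (implicit in yours, explicit in the paper's) that ramification at $x$ is equivalent to $F_1'(x)=F_2'(x)=0$. From there the paper uses a `power sum' amplification: since ramification of $f$ forces ramification of every composition $\varphi_n\circ f$, where $\varphi_n$ reads off the first $n$ power sums of $x^{v_1},x^{v_2},x^{v_3}$, one obtains $v_1x^{kv_1}+v_2x^{kv_2}+v_3x^{kv_3}=0$ for \emph{all} $k\in\N$; hence the orbit $\{x^k\}$ sits inside the finite zero set of a fixed nonconstant Laurent polynomial, forcing $x$ to be a root of unity. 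You, instead, work only with the two given equations and run a direct symmetric elimination (producing the factor $v_1(a-b)(a-c)$) to conclude $x^{v_1}=x^{v_2}=x^{v_3}$, hence $x^3=1$ via B\'ezout. Your argument is more elementary — no composition with $\varphi_n$, no finiteness-of-zero-sets argument — and it yields the sharper conclusion that the non-unit ramification locus is actually contained in the cube roots of unity, not merely in the roots of unity. The trade-off is that your case analysis leans harder on $v_1v_2v_3\neq0$ (and on $v_1+v_2=-v_3\neq0$), so it visibly breaks for $v=(-1,0,1)$; the paper's finiteness argument would survive that case unchanged, though the hypothesis is already in force in this section. I verified the elimination: multiplying $v_1(a-c)+v_2(b-c)=0$ by $a$ and subtracting $v_1b(a-c)+v_2a(b-c)=0$ does indeed give $v_1(a-b)(a-c)=0$, and both branches of the dichotomy collapse to $a=b=c$ as you claim.
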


\begin{proof}
 Since every power sum in three variables can be written as a polynomial in the first three elementary symmetric polynomials, there is for every $n\in\N$ a polynomial map
 $\varphi_n\colon \C^2\to\C^n$
 such that 
 \begin{equation*}
 \varphi_n(f(x))=(x^{v_1}+x^{v_2}+x^{v_3},    x^{2v_1}+x^{2v_2}+x^{2v_3},\ldots,x^{nv_1}+x^{nv_2}+x^{nv_3}).
 \end{equation*}
 If $f$ is ramified at $x\in\C^*$, then $\varphi_n\circ f$ is also ramified at $x$ for every $n\in\N$. Thus
 \begin{equation*}
    kv_1x^{kv_1-1}+kv_2x^{kv_2-1}+kv_3x^{kv_3-1}=0 
 \end{equation*}
 for all $k\in\N$. As $x$ and $k$ are nonzero, this implies that
 \begin{equation*}
    v_1x^{kv_1}+v_2x^{kv_2}+v_3x^{kv_3}=0
 \end{equation*}
 for all $k\in\N$. This means that $x^k$ is a zero of the nonconstant polynomial 
 \begin{equation*}
  v_1z^{v_1}+v_2z^{v_2}+v_3z^{v_3}\in\Q[z]   
 \end{equation*}
 for all $k\in\N$. Hence the set
 \begin{equation*}
     \{x^k\mid k\in\N\}
 \end{equation*}
 is finite which implies that $x$ is a root of unity.
\end{proof}

\begin{cor}\label{cor:noniso}
 There is a finite set $S\subset\C^*$ of roots of unity such that 
 \begin{equation*}
  \C^*\setminus S\to X\setminus f(S),\, x\mapsto f(x)   
 \end{equation*}
 is an isomorphism.
\end{cor}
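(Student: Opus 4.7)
The plan is to take $S \subset \C^*$ to be the union of the set of points $x$ at which $|f^{-1}(f(x))| > 1$ and the set of points at which $f$ is ramified. By Proposition~\ref{prop:mostlyinj} together with Proposition~\ref{prop:rami}, both sets are finite and consist of roots of unity, so $S$ itself is a finite set of roots of unity, giving the first half of the conclusion for free.

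My first concrete step is to verify the set-theoretic identity $f^{-1}(f(S)) = S$, which ensures that $f$ restricts to a bijection $g \colon \C^* \setminus S \to X \setminus f(S)$. If $f(x) = f(s)$ for some $s \in S$, then either $|f^{-1}(f(s))| \geq 2$ (in which case $x$ satisfies the defining condition of $S$) or $|f^{-1}(f(s))| = \{s\}$ and $f$ is ramified at $s$ (forcing $x = s$); either way $x \in S$. Base change of the finite morphism $f$ (Lemma~\ref{lem:finite}) along the open inclusion $X \setminus f(S) \hookrightarrow X$ then keeps $g$ finite, and by construction $g$ is bijective and unramified at every point of its source.

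It remains to upgrade $g$ from a finite, bijective, unramified morphism to an isomorphism. The cleanest route is via normalization: since $\C^*$ is smooth and $f$ is finite and birational, the field identity $K = \C(z)$ from Proposition~\ref{prop:mostlyinj} exhibits $f$ as the normalization of $X$, and the normalization restricts to an isomorphism precisely over the smooth locus $X_{\mathrm{sm}} \subset X$. I would therefore argue $X_{\mathrm{sing}} \subset f(S)$, i.e., that at every $x \in \C^* \setminus S$ the curve $X$ is smooth at $f(x)$. A hands-on alternative, avoiding normalization theory, is to check for each such $x$ with $p = f(x)$ that the local ring map $\mathcal{O}_{X,p} \to \mathcal{O}_{\C^*,x}$ is an isomorphism: injectivity comes from birationality; unramifiedness provides $\mathfrak{m}_p \mathcal{O}_{\C^*,x} = \mathfrak{m}_x$, so Nakayama's lemma applied to the finite extension with one-dimensional closed fiber (by bijectivity) yields surjectivity.

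The main obstacle is this last step: namely, articulating cleanly that a finite, bijective, unramified morphism of curves with smooth source is an isomorphism onto its image. Everything else amounts to bookkeeping on top of the two propositions already established.
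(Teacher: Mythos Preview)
Your proof is correct and takes essentially the same approach as the paper, which simply invokes \cite[Thm.~14.9]{Ha95} for the step that a finite, bijective, unramified morphism from a smooth variety is an isomorphism onto its image. Your normalization and Nakayama arguments unpack exactly what that citation contains.
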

 
 \begin{proof}
 This follows from Proposition \ref{prop:mostlyinj}, Proposition \ref{prop:rami} and Lemma \ref{lem:finite} by \cite[Thm.~14.9]{Ha95}.
 \end{proof}

\begin{rmk}
 The smallest set $S\subset\C^*$, such that \begin{equation*}
  \C^*\setminus S\to X\setminus f(S),\, x\mapsto f(x)   
 \end{equation*}
 is an isomorphism, is the preimage of the singular locus of the curve $X$ under $f$.
\end{rmk}

\begin{example}\label{ex:11}
Let $v=(1,1,-2)$. Then $X$ is the zero set of the bivariate quartic polynomial
\begin{equation*}
  x_1^2 x_2^2-4 x_1^3-4 x_2^3+18 x_1 x_2-27
\end{equation*}
in $\C^2$.
The three points $f(1)=(3,3)$, $f(\xi)=(3\xi,3\xi^2)$ and $f(\xi^2)=(3\xi^2,3\xi)$ form the singular locus.
In particular, its preimage under $f$ is the set of third roots of unity.
See Figure~\ref{fig: X for v=11}.
Note that this motivates the choice $b_0=3$ in Remark~\ref{rmk:testconj}.
\begin{figure}[h]
    \centering
    \includegraphics[width=8cm]{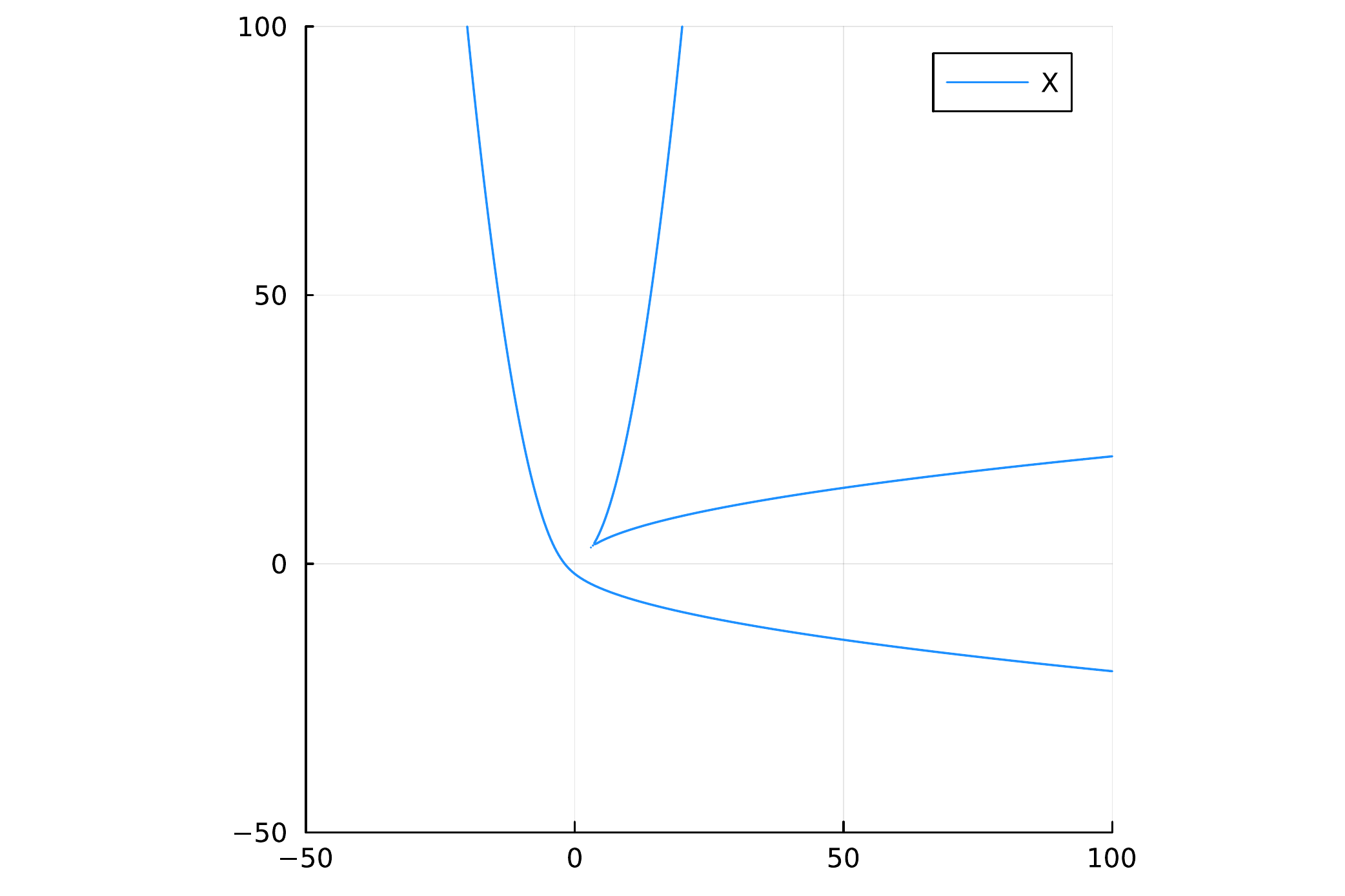}
    \caption{The real locus of $X$ for $v = (1,1,-2)$ plotted in the plane.}
    \label{fig: X for v=11}
\end{figure}
\end{example}

\begin{example}\label{ex:12}
For $v=(1,2,-3)$ one computes that $X$ is cut out by 
\begin{equation*}
  \begin{split}
    x_1^3 x_2^3&-x_1^5-3 x_1^4 x_2-3 x_1 x_2^4-x_2^5-x_1^4+5 x_1^3 x_2+10 x_1^2 x_2^2+5 x_1 x_2^3- x_2^4 \\ &+x_1^3  -x_1^2 x_2-x_1 x_2^2+x_2^3-7 x_1^2-13 x_1 x_2-7 x_2^2.
  \end{split}
\end{equation*}
\normalsize{The preimage of its singular locus under $f$ is the set of all third roots of unity along with the set of primitive seventh and eighth roots of unity.}
\end{example}

Recall that the \emph{conductor} of the ring extension $B(v)\subset\C[z^\pm]$ is defined as 
\begin{equation*}
  I=\{a\in B(v)\mid a\cdot \C[z^\pm]\subset B(v)\} 
\end{equation*}
which is an ideal in both $B(v)$ and $\C[z^\pm]$. The zero set of $I$ is contained in the locus where $f$ fails to be an isomorphism \cite[p.~316]{Bour72Comm}. Thus by Corollary \ref{cor:noniso} and because $\C[z^\pm]$ is a principal ideal domain, it follows that $I$ is generated by a Laurent polynomial all of whose zeros are roots of unity. Since $f$ is defined over $\Q$, this Laurent polynomial is also defined over $\Q$. Therefore, we can write the generator of $I$ as $(z-1)^m\cdot \Phi$ where $\Phi$ is a product of cyclotomic polynomials with $\Phi(1)\neq0$.

\begin{lem}\label{lem:cusp}
 We have $m=2$.
\end{lem}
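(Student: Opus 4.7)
The plan is to localize at $z = 1$ and identify the singularity of the curve $X$ at $f(1) = (3,3)$ as an ordinary cusp. Let $\mathcal{O} := \mathbb{C}[z^\pm]_{(z-1)}$ denote the DVR with uniformizer $t := z-1$, and write $R_P := B(v)_{\mathfrak{p}}$ for the localization of $B(v)$ at the unique prime $\mathfrak{p} := (F_1-3, F_2-3)$ lying below $(z-1)$ (Lemma~\ref{lem:fibone}). Since $\Phi(1) \neq 0$, $\Phi$ is a unit in $\mathcal{O}$, and the localization of the conductor ideal at $(z-1)$ is $(t^m) \subset \mathcal{O}$; thus $m$ is the smallest non-negative integer with $t^m \mathcal{O} \subset R_P$.

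For the lower bound $m \geq 2$, it suffices to show $t \notin R_P$. Write any $h \in R_P$ as $G(F_1, F_2)/H(F_1, F_2)$ with $H(3,3) \neq 0$. Differentiating the identity $h \cdot H(F_1, F_2) = G(F_1, F_2)$ at $z=1$ and using $F_1'(1) = v_1 + v_2 + v_3 = 0$ together with $F_2'(1) = 2(v_1 + v_2 + v_3) = 0$ yields $h'(1) \cdot H(3,3) = 0$, so $h'(1) = 0$. Since $t'(1) = 1 \neq 0$, indeed $t \notin R_P$.

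For the upper bound $m \leq 2$, the key computation is the Taylor expansion at $z = 1$:
\[
F_1 - 3 \;=\; \tfrac{v_1^2 + v_2^2 + v_3^2}{2}\, t^2 + O(t^3), \qquad
F_1 - F_2 \;=\; \sum_{i=1}^{3}(z^{v_i} - z^{-v_i}) \;=\; v_1 v_2 v_3 \, t^3 + O(t^4),
\]
where the first uses $\sum_i v_i = 0$ and the second additionally uses the Newton identity $\sum_i v_i^3 = 3 v_1 v_2 v_3$ (valid when $\sum_i v_i = 0$). Both leading coefficients are nonzero because $v_1 v_2 v_3 \neq 0$, so the value semigroup $S := v_\mathcal{O}(R_P \setminus \{0\})$ contains $2$ and $3$; combined with $1 \notin S$ from the lower-bound step, $S = \langle 2, 3 \rangle = \{0, 2, 3, 4, \ldots\}$. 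This is the semigroup of the standard cusp $\mathbb{C}[t^2, t^3] \subset \mathbb{C}[t]$, whose conductor in $\mathbb{C}[t]$ is $(t^2)$.

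The main obstacle is the last passage from the value semigroup to the conductor multiplicity. Since $X \subset \mathbb{C}^2$ is a plane curve, $R_P$ is a local complete intersection, hence Gorenstein, and the Gorenstein duality $\dim_\mathbb{C}(\mathcal{O}/\mathrm{cond}) = 2\delta$ with $\delta := |\mathbb{N} \setminus S| = 1$ immediately gives $m = 2$. Alternatively, one notes that $\mathfrak{p}_P \mathcal{O} = (t^2)$ (forced by $v_\mathcal{O}(F_1-3) = 2$), so Nakayama's lemma shows $\mathcal{O}$ is generated by $\{1, t\}$ as an $R_P$-module; one then checks directly that every element of $\mathcal{O}$ of valuation $\geq 2$ lies in $R_P$, using that a nontrivial decomposition $r = r_0 + \gamma t$ with $\gamma \neq 0$ would produce an element of $R_P$ of valuation $1$, contradicting $1 \notin S$.
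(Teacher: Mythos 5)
Your proof is correct and follows the same geometric idea as the paper --- identify the singularity of $X$ at $f(1)$ as an ordinary cusp and read off the conductor --- but it is considerably more explicit and, in one respect, more complete. The paper records $F_1'(1)=F_2'(1)=0$ and $F_1''(1)=F_2''(1)=v_1^2+v_2^2+v_3^2\neq0$, invokes Lemma~\ref{lem:fibone}, declares the singularity to be an ordinary cusp, and then cites Fulton's Proposition~1 for the conductor. These data alone only pin down a unibranch double point, i.e.\ an $A_{2m}$-singularity with semigroup $\langle 2, 2m+1\rangle$ for some $m\geq 1$; distinguishing $A_2$ from the higher cases requires exhibiting an element of valuation exactly $3$. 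Your computation of $F_1-F_2=v_1v_2v_3\,t^3+O(t^4)$ (via $\sum_i v_i^3=3v_1v_2v_3$ when $\sum_i v_i=0$, and using $v_1v_2v_3\neq 0$) supplies precisely this missing step, together with the direct argument $t\notin R_P$ which gives $1\notin S$; the resulting semigroup $\langle 2,3\rangle$ then gives $m=2\delta=2$ via Gorenstein duality for the plane curve $X$, rather than via Fulton's adjoint formula. This makes your argument self-contained where the paper delegates to a reference, and it also makes transparent exactly where the hypothesis $v_1v_2v_3\neq 0$ enters. One caveat: the \enquote{alternative} direct argument you sketch at the end (Nakayama plus valuation bookkeeping) is not quite closed as written --- after concluding $v(\gamma)\geq 2$ you would still need $\gamma t\in R_P$, which is part of what is being proved --- but since the Gorenstein duality route is complete and correct, this does not affect the validity of the proof.
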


\begin{proof}
 Recall from \eqref{eq:curve} that $X$ is the image of the map
 \begin{equation*}
 f\colon \C^*\to \C^2,\, z\mapsto \bigl(F_1(z),F_2(z)\bigr)=(z^{v_1}+z^{v_2}+z^{v_3},z^{v_1+v_2}+z^{v_1+v_3}+z^{v_2+v_3}).
\end{equation*}
 We have $F_1'(1)=F_2'(1)=0$ and $F_1''(1)=F_2''(1)=v_1^2+v_2^2+v_3^2\neq0$. Together with Lemma~\ref{lem:fibone} this implies that $X$ has an ordinary cusp at the image of $1$. The coordinate ring of $X$ is $B(v)$. Proposition~\ref{prop:mostlyinj} and Lemma~\ref{lem:finite} imply that $\C[z^\pm]$ is the integral closure of $B(v)$. In this situation the conductor has been computed in \cite[Proposition~1]{fultonblowup}.
\end{proof}

As we are actually interested in polynomials with rational coefficients, we consider $A(v)$ the $\Q$-subalgebra of $\Q[z^\pm]$ generated by $F_1$ and $F_2$ and we let $I'$ the ideal of $\Q[z^\pm]$ generated by $(z-1)^m\cdot \Phi$.

\begin{cor}
 The ideal $I'$ is the conductor of the ring extension $A(v)\subset\Q[z^\pm]$:
 \begin{equation*}
     I'=\{a\in A(v)\mid a\cdot \Q[z^\pm]\subset A(v)\}.
 \end{equation*}
\end{cor}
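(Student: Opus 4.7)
The plan is to descend the description of the conductor $I=(z-1)^2\Phi\cdot\C[z^\pm]$ of the extension $B(v)\subset\C[z^\pm]$ from Lemma~\ref{lem:cusp} to the rational level by faithfully-flat descent. The key identifications are $A(v)=B(v)\cap\Q[z^\pm]$ and $B(v)=A(v)\otimes_\Q\C$, both immediate from the definitions since $F_1,F_2\in\Q[z^\pm]$; the geometric content has already been distilled into Lemma~\ref{lem:cusp} and Corollary~\ref{cor:noniso}, so no substantial obstacle is anticipated here.

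For the inclusion ``$\subset$'', I would first observe that the generator $(z-1)^2\Phi$ lies in $I\subset B(v)$ and has rational coefficients, so it already lies in $A(v)=B(v)\cap\Q[z^\pm]$. For every $g\in\Q[z^\pm]$, the product $(z-1)^2\Phi\cdot g$ similarly lies in $I\cap\Q[z^\pm]\subset A(v)$. Hence every element of $I'$ belongs to $A(v)$, and multiplying any element of $I'$ by an element of $\Q[z^\pm]$ again lands in $I'\subset A(v)$.

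For the reverse inclusion, assume $a\in A(v)$ satisfies $a\cdot\Q[z^\pm]\subset A(v)$. I would first upgrade this to $a\cdot\C[z^\pm]\subset B(v)$: picking a $\Q$-basis $\{e_i\}$ of $\C$ gives $\C[z^\pm]=\bigoplus_i e_i\Q[z^\pm]$ and $B(v)=\bigoplus_i e_i A(v)$, and for $g\in\Q[z^\pm]$ one has $a\cdot(e_i g)=e_i\cdot(ag)\in e_i A(v)\subset B(v)$. Thus $a\in I$, so $a=(z-1)^2\Phi\cdot h$ for some $h\in\C[z^\pm]$. Since $a,(z-1)^2\Phi\in\Q[z^\pm]$, the quotient $h$ also lies in $\Q(z)$; a short coprimeness argument in $\Q[z]$ (equivalently, the fact that $\Q[z^\pm]$ is integrally closed in $\Q(z)$) then gives $h\in\Q(z)\cap\C[z^\pm]=\Q[z^\pm]$, whence $a\in I'$.
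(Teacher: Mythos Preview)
Your argument is correct and is precisely an unpacking of the paper's one-line proof, which reads in full: ``This follows from $I'=I\cap\Q[z^\pm]$ and $A(v)=B(v)\cap\Q[z^\pm]$.'' Your two inclusions amount to verifying the first identity and then using both to pass the conductor condition between $\C$ and $\Q$, exactly as the paper intends.
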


\begin{proof}
 This follows from $I'=I\cap\Q[z^\pm]$ and $A(v)=B(v)\cap\Q[z^\pm]$.
\end{proof}

Let $C_0\subset \Q[z^\pm]/I'$ the image of $A(v)$ modulo $I'$.

\begin{lem}\label{lem:subalg}
Let $g\in\Q[z^\pm]$. Then we have $g\in A(v)$ if and only if the residue class of $g$ modulo $I'$ is in $C_0$.
\end{lem}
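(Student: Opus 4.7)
The plan is to exploit the key property of the conductor $I'$: by its very definition, $I'$ is simultaneously an ideal of $A(v)$ \emph{and} of $\Q[z^\pm]$, and in particular $I' \subset A(v)$. This containment is what makes the lemma essentially a formal consequence of the preceding corollary.

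The ``only if'' direction is immediate from the definition of $C_0$: if $g \in A(v)$, then its image modulo $I'$ lies in the image $C_0$ of $A(v)$. For the ``if'' direction, suppose the residue class of $g$ modulo $I'$ lies in $C_0$. By the definition of $C_0$, there exists $h \in A(v)$ whose residue class modulo $I'$ equals that of $g$, so $g - h \in I'$. Since $I' \subset A(v)$ by the conductor property, we conclude $g = h + (g-h) \in A(v)$.

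There is no genuine obstacle here; the content of the lemma is concentrated in the earlier structural result that identifies $I'$ as the conductor. Once we know $I'$ is an ideal of $A(v)$, the statement is a purely formal reformulation of the definition of the quotient. With Theorem~\ref{thm:main} in mind, one can take $C$ to be the image of $C_0$ under the natural surjection $\Q[z^\pm]/I' \twoheadrightarrow \Q[z^\pm]/(\Phi)$, and the remaining condition $F'(1) = 0$ accounts precisely for the factor $(z-1)^2$ in the generator of $I'$ (via the derivative vanishing condition at $z=1$, using $m=2$ from Lemma~\ref{lem:cusp}).
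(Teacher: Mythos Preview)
Your proof is correct and matches the paper's argument essentially verbatim: both directions rest on $I'\subset A(v)$, with the nontrivial direction writing $g=h+(g-h)$ for some $h\in A(v)$ and $g-h\in I'$. Your closing paragraph is extraneous commentary on Theorem~\ref{thm:main} rather than part of the proof of this lemma, but it does no harm.
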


\begin{proof}
One direction is trivial so let us assume that the residue class of $g$ modulo $I'$ is in $C_0$. Thus there is a $h_1\in A(v)$ and $h_2\in I'$ such that $g=h_1+h_2$. Thus $g\in A(v)$ because $I'\subset A(v)$.
\end{proof}
By the Chinese remainder theorem we can naturally identify
\begin{equation*}
    \Q[z^\pm]/I'=(\Q[z^\pm]/(z-1)^2)\times (\Q[z^\pm]/\Phi).
\end{equation*}

\begin{lem}\label{lem:product}
 We have $C_0=\Q\times C$, where
 \begin{equation*}
     C=\{g\in \Q[z^\pm]/\Phi\mid \exists h\in \Q[z^\pm]/(z-1)^2)\colon (h,g)\in C_0\}.
 \end{equation*}
\end{lem}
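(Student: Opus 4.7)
The plan is to interpret $C_0$ as $A(v)/I'$ via Lemma~\ref{lem:subalg} and establish $A(v)/I' = \Q \times C$ through the canonical product decomposition of an Artinian ring over its maximal ideals.

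The containment $C_0 \subseteq \Q \times C$ is immediate. For any $F \in A(v)$, the product rule applied to $F_1, F_2$ (both of which have vanishing derivative at $1$) gives $F'(1) = 0$, so $F \equiv F(1) \pmod{(z-1)^2}$. The first coordinate of the image of $F$ in $\Q[z^\pm]/I' = \Q[z^\pm]/(z-1)^2 \times \Q[z^\pm]/\Phi$ is thus the constant $F(1) \in \Q$, and the second coordinate lies in $C$ by definition.

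For the reverse containment, I note that $A(v) \subseteq \Q[z^\pm]$ is a finite extension by Lemma~\ref{lem:finite}, and $\Q[z^\pm]/I'$ is a finite-dimensional $\Q$-algebra, so $A(v)/I'$ is a finite-dimensional $\Q$-algebra, hence Artinian. By the structure theorem for Artinian rings, $A(v)/I'$ decomposes canonically as a product of local Artinian rings indexed by its maximal ideals. These maximal ideals correspond to the points of $V(I')$ in $\operatorname{Spec}(A(v))$, namely the cusp $f(1)$ together with the remaining singular points $f(\zeta)$ of $X$ for $\zeta$ a root of $\Phi$; these are pairwise distinct closed points of $X$, because $\Phi(1) \neq 0$ and $f^{-1}(f(1)) = \{1\}$ by Lemma~\ref{lem:fibone}. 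Hence $A(v)/I'$ splits as a cusp factor times a product of non-cusp factors.

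This decomposition is compatible with the CRT decomposition of $\Q[z^\pm]/I'$ under the inclusion $A(v)/I' \hookrightarrow \Q[z^\pm]/I'$: the preimages of the cusp in $\C^*$ consist of the single point $\{1\}$, corresponding to the factor $\Q[z^\pm]/(z-1)^2$, while the preimages of the other singular points are the roots of $\Phi$, corresponding to the factor $\Q[z^\pm]/\Phi$. Thus the cusp factor of $A(v)/I'$ embeds into $\Q[z^\pm]/(z-1)^2$ with image $\Q$ (by the first paragraph), and the product of the non-cusp factors embeds into $\Q[z^\pm]/\Phi$ with image equal to $C$ by definition (the cusp factor contributes zero to the second coordinate). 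This yields $C_0 = \Q \times C$. The main technical hurdle is making rigorous the compatibility of the two canonical product decompositions under the inclusion $A(v)/I' \hookrightarrow \Q[z^\pm]/I'$, which I would justify using the standard theory of integral ring extensions: the maximal ideals of $A(v)/I'$ are the contractions of those of $\Q[z^\pm]/I'$, and a local factor of $A(v)/I'$ at a singular point $P$ of $X$ embeds precisely into the product of localizations of $\Q[z^\pm]/I'$ at the preimages of $P$ in $\C^*$.
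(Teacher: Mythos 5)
Your proof is correct and reaches the same conclusion, but by a genuinely different route. The heart of the matter is showing that the idempotent $(1,0)\in\Q[z^\pm]/I'=\Q[z^\pm]/(z-1)^2\times\Q[z^\pm]/\Phi$ actually lies in the subalgebra $C_0$; once one knows this, the identity $C_0=\Q\times C$ follows formally from the containment $C_0\subseteq\Q\times C$ of the first paragraph (since $(1,0)\cdot(h,g)=(h,0)$, $(h,g)-(h,0)=(0,g)$, and $a\cdot(1,0)=(a,0)$ for $a\in\Q$). The paper obtains this idempotent \emph{explicitly}: by Lemma~\ref{lem:fibone} the cusp $f(1)$ is distinct from every $f(\zeta)$ with $\Phi(\zeta)=0$, so one can choose $G\in\Q[x_1,x_2]$ vanishing on all such $f(\zeta)$ with $G(f(1))=1$; then a sufficiently high power of $G(F_1,F_2)\in A(v)$ reduces to $(1,0)$ modulo $I'$ (the first coordinate is $1$ because $F_i'(1)=0$ kills any first-order term, and the second becomes $0$ because a high power lands in $(\Phi)$). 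You instead deduce the existence of this idempotent \emph{abstractly} from the structure theorem for Artinian rings applied to $A(v)/I'$, then track it into $\Q[z^\pm]/I'$ via the lying-over correspondence for the finite extension $A(v)\subset\Q[z^\pm]$, using $f^{-1}(f(1))=\{1\}$ and $\Phi(1)\neq0$ to see that the only maximal ideal of $\Q[z^\pm]/I'$ above the cusp is $(z-1)$. Your route is more conceptual and would generalize readily to conductor squares for other finite birational normalizations; the paper's is more elementary and self-contained, avoiding the structure theorem and the maximal-ideal bookkeeping. One small point worth tightening in a final write-up: over $\Q$ the maximal ideals of $A(v)/I'$ correspond to Galois orbits of closed points rather than to individual points of $\C^*$, though this is harmless here because $f(1)$ is $\Q$-rational and forms its own orbit, disjoint from the orbits of the roots of $\Phi$.
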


\begin{proof}
Since $F_i(1)=3$ and the derivative of $F_i$ vanishes at $1$, we have 
\begin{equation*}
 F_i\equiv 3\mod(z-1)^2   
\end{equation*}
for $i=1,2$. This shows the inclusion "$\subset$". For the reverse inclusion we observe that, by Lemma \ref{lem:fibone}, there is a polynomial $G\in\Q[x_1,x_2]$ that vanishes on $f(\zeta)$ for all zeros $\zeta$ of $\Phi$ but $G(f(1))=1$. The residue class modulo $I'$ of a large enough power of $G(F_1,F_2)\in A(v)\subset\Q[z^\pm]$ is then $(1,0)$. In particular, this shows that $(1,0)\in C_0$ and proves the claim.
\end{proof} 

\begin{proof}[Proof of Theorem \ref{thm:main}]
Let $F\in\Q[z^\pm]$. Then by Lemma \ref{lem:subalg} and Lemma \ref{lem:product} we have that $F$ lies in $A(v)$ if and only if the residue class of $F$ modulo $\Phi$ is in $C$ and the residue class of $F$ modulo $(z-1)^2$ in $\Q$. The latter condition is equivalent to $F'(1)=0$, which implies the claim.
\end{proof}

\begin{example}
 In the case $v=(1,2,-3)$, by Example \ref{ex:12} the smallest $b$ such that $F_{a,b}$ can possibly be divisible by the polynomial $Q$ from Theorem \ref{thm:main} for some $a\in\N$ is $b=3\cdot 7\cdot 8=168$.
\end{example}

\section{Conclusion and Outlook}

As indicated already in the remarks after Question 20 in \cite{MR893156}, a similar strategy applies also to the cases $A=B=\SU(n)$. The only difference is that we now have to consider exponent vectors $(v_1,\dots,v_n) \in \Z^n$ satisfying $v_1+\cdots+ v_n=0$ and $\gcd(v_1,\dots,v_n)=1$ and Schur polynomials in $n$ variables. Putting it more precisely, we obtain the following question:
 
\begin{quest}
Given $v = (v_1,\dots,v_n)$ and $w = (w_1,\dots,w_n)$ be integer vectors satisfying $v_1+\cdots+v_n=w_1+\cdots+w_n=0$ and $\gcd(v_1,\dots, v_n) = \gcd(w_1,\dots,w_n) = 1$, can we find Schur positive symmetric polynomials in $n$ variables $P$ and $Q$ such that:
\begin{enumerate}
    \item $P_v(z) = Q_w(z)$,
    \item $P(\xi,\dots,\xi) \neq P(1,\dots,1)$ and $Q(\xi,\dots,\xi)\neq Q(1,\dots,1),$
    for $\xi^n=1,\xi \neq 1$.
\end{enumerate}
\end{quest}

Theorem~\ref{thm:main} describes the Laurent polynomials which are a $\Q$-linear combination of monomials in $(e_1)_v(z),(e_2)_v(z),(e_3)_v(z)$ where $e_k$ is the elementary symmetric polynomial in three variables of degree $k$. For our purposes it would however be much more desirable to have an understanding of which Laurent polynomials are a \emph{positive} $\Q$-linear combination of monomials in $(e_1)_v(z),(e_2)_v(z),(e_3)_v(z)$. Indeed, since elementary symmetric polynomials are Schur positive and since the product of Schur positive polynomials is again Schur positive, a positive $\Z$-linear combination of monomials in $e_1,e_2,e_3$ is always Schur positive. We tried to get our hands on this by suitable variants of P\'olya's Theorem \cite[Theorem~5.5.1]{MarshSOS} but we have not been successful.

\begin{bibdiv}
\begin{biblist}

\bib{MR893156}{article}{
   author={Bergman, George M.},
   title={The amalgamation basis of the category of compact groups},
   journal={Manuscripta Math.},
   volume={58},
   date={1987},
   number={3},
   pages={253--281},
}

\bib{scip}{techreport}{
  author = {Ksenia Bestuzheva et al.}, 
  title = {{The \scip Optimization Suite 8.0}},
  type = {Technical Report},
  institution = {Optimization Online},
  month = {December},
  year = {2021},
  url = {http://www.optimization-online.org/DB_HTML/2021/12/8728.html}
}

\bib{Bour72Comm}{book}{
    AUTHOR = {Bourbaki, Nicolas},
     TITLE = {Commutative algebra. {C}hapters 1--7},
    SERIES = {Elements of Mathematics (Berlin)},
      NOTE = {Translated from the French, Reprint of the 1989 English translation},
 PUBLISHER = {Springer-Verlag, Berlin},
      YEAR = {1998},
     PAGES = {xxiv+625},
      ISBN = {3-540-64239-0},
}

\bib{OSCAR-book}{book}{
  editor = {Decker, Wolfram},
  editor = {Eder, Christian},
  editor = {Fieker, Claus},
  editor = {Horn, Max},
  editor = {Joswig, Michael},
  title = {The \OSCAR book},
  year = {2024},
}

\bib{DMV:polymake}{incollection}{
    AUTHOR = {Gawrilow, Ewgenij},
    AUTHOR = {Joswig, Michael},
     TITLE = {\polymake: a framework for analyzing convex polytopes},
 BOOKTITLE = {Polytopes---combinatorics and computation (Oberwolfach, 1997)},
    SERIES = {DMV Sem.},
    VOLUME = {29},
     PAGES = {43--73},
 PUBLISHER = {Birk\-h\"au\-ser},
   ADDRESS = {Basel},
      YEAR = {2000},
}

\bib{MR1153249}{book}{
   author={Fulton, William},
   author={Harris, Joe},
   title={Representation theory},
   series={Graduate Texts in Mathematics},
   volume={129},
   note={A first course; Readings in Mathematics},
   publisher={Springer-Verlag, New York},
   date={1991},
   pages={xvi+551},
}

\bib{fultonblowup}{incollection}{
    AUTHOR = {Fulton, William},
     TITLE = {Adjoints and {M}ax {N}oether's {F}undamentalsatz},
 BOOKTITLE = {Algebra, arithmetic and geometry with applications ({W}est {L}afayette, {IN}, 2000)},
     PAGES = {301--313},
 PUBLISHER = {Springer, Berlin},
      YEAR = {2004},
}

\bib{GroetschelLovaszSchrijver93}{book}{
  title = {Geometric algorithms and combinatorial optimization},
  publisher = {Springer-Verlag},
  year = {1993},
  author = {Gr{\"o}tschel, Martin},
  author = {Lov{\'a}sz, L{\'a}szl{\'o}},
  author = {Schrijver, Alexander},
  volume = {2},
  pages = {xii+362},
  series = {Algorithms and Combinatorics},
  address = {Berlin},
  edition = {Second},
  isbn = {3-540-56740-2},
}

\bib{MR1410059}{book}{
   author={Br\"{o}cker, Theodor},
   author={tom Dieck, Tammo},
   title={Representations of compact Lie groups},
   series={Graduate Texts in Mathematics},
   volume={98},
   note={Translated from the German manuscript;
   Corrected reprint of the 1985 translation},
   publisher={Springer-Verlag, New York},
   date={1995},
   pages={x+313},
   isbn={0-387-13678-9},
   review={\MR{1410059}},
}

\bib{Ha95}{book}{
   author={Harris, Joe},
   title={Algebraic geometry},
   series={Graduate Texts in Mathematics},
   volume={133},
   note={A first course;
   Corrected reprint of the 1992 original},
   publisher={Springer-Verlag, New York},
   date={1995},
   pages={xx+328},
}

\bib{Hart77}{book}{
    AUTHOR = {Hartshorne, Robin},
     TITLE = {Algebraic geometry},
    SERIES = {Graduate Texts in Mathematics, No. 52},
 PUBLISHER = {Springer-Verlag, New York-Heidelberg},
      YEAR = {1977},
     PAGES = {xvi+496},
      ISBN = {0-387-90244-9},
}

\bib{MR4201900}{book}{
   author={Hofmann, Karl H.},
   author={Morris, Sidney A.},
   title={The structure of compact groups---a primer for the student---a
   handbook for the expert},
   series={De Gruyter Studies in Mathematics},
   volume={25},
   publisher={De Gruyter, Berlin},
   date={2020},
   pages={xxvii+1006},
   doi={10.1515/9783110695991},
}

\bib{Knapp:1986}{book}{
    AUTHOR = {Knapp, Anthony W.},
     TITLE = {Representation theory of semisimple groups},
    SERIES = {Princeton Mathematical Series},
    VOLUME = {36},
      NOTE = {An overview based on examples},
 PUBLISHER = {Princeton University Press, Princeton, NJ},
      YEAR = {1986},
     PAGES = {xviii+774},
      ISBN = {0-691-08401-7},
       DOI = {10.1515/9781400883974},
       URL = {https://doi.org/10.1515/9781400883974},
}

\bib{MarshSOS}{book}{
    AUTHOR = {Marshall, Murray},
     TITLE = {Positive polynomials and sums of squares},
    SERIES = {Mathematical Surveys and Monographs},
    VOLUME = {146},
 PUBLISHER = {American Mathematical Society, Providence, RI},
      YEAR = {2008},
     PAGES = {xii+187},
      ISBN = {978-0-8218-4402-1; 0-8218-4402-4},
       DOI = {10.1090/surv/146},
       URL = {https://doi.org/10.1090/surv/146},
}
     
\bib{OSCAR}{misc}{
  organization = {The \OSCAR Team},
  title        = {\OSCAR -- Open Source Computer Algebra Research system, Version 0.11.3},
  year         = {2023},
  url          = {https://www.oscar-system.org},
  }

\bib{Renegar:2001}{book}{
    AUTHOR = {Renegar, James},
     TITLE = {A mathematical view of interior-point methods in convex optimization},
    SERIES = {MPS/SIAM Series on Optimization},
 PUBLISHER = {Society for Industrial and Applied Mathematics (SIAM), Philadelphia, PA; Mathematical Programming Society (MPS), Philadelphia, PA},
      YEAR = {2001},
     PAGES = {viii+117},
      ISBN = {0-89871-502-4},
       DOI = {10.1137/1.9780898718812},
       URL = {https://doi.org/10.1137/1.9780898718812},
}
     
\bib{Schrijver:TOLIP}{book}{
    AUTHOR = {Schrijver, Alexander},
     TITLE = {Theory of linear and integer programming},
    SERIES = {Wiley-Interscience Series in Discrete Mathematics},
      NOTE = {A Wiley-Interscience Publication},                 
 PUBLISHER = {John Wiley \& Sons, Ltd., Chichester},
      YEAR = {1986},
     PAGES = {xii+471},
      ISBN = {0-471-90854-1},
}

\end{biblist}
\end{bibdiv} 

\end{document}